\documentclass[sn-mathphys-num]{sn-jnl}% Math and Physical Sciences Numbered Reference Style 
%%\documentclass[sn-mathphys-ay]{sn-jnl}% Math and Physical Sciences Author Year Reference Style
%%\documentclass[sn-aps]{sn-jnl}% American Physical Society (APS) Reference Style
%%\documentclass[sn-vancouver,Numbered]{sn-jnl}% Vancouver Reference Style
%%\documentclass[sn-apa]{sn-jnl}% APA Reference Style 
%%\documentclass[sn-chicago]{sn-jnl}% Chicago-based Humanities Reference Style

%%%% Standard Packages
%%<additional latex packages if required can be included here>

\usepackage{graphicx}%
\usepackage{multirow}%
\usepackage{amsmath,amssymb,amsfonts}%
\usepackage{amsthm}%
\usepackage{mathrsfs}%
\usepackage[title]{appendix}%
\usepackage{xcolor}%
\usepackage{textcomp}%
\usepackage{manyfoot}%
\usepackage{booktabs}%
\usepackage{algorithm}%
\usepackage{algorithmicx}%
\usepackage{algpseudocode}%
\usepackage{listings}%
%%%%

%%%%

%%%%%=============================================================================%%%%
% Define the line numbering style
% Define custom line numbering style

\usepackage{caption}    % For customizing captions
\usepackage{subcaption} % For subtables
\expandafter\def\csname ver@subfig.sty\endcsname{}

\usepackage{float}      % For the 'H' placement option
\usepackage{booktabs,makecell}
\usepackage{adjustbox}
\usepackage{diagbox}

\usepackage{xcolor}

\usepackage{graphicx,epstopdf} % <- Preamble

\usepackage{array}

%% Used in table and figure examples below
%\usepackage[caption=false]{subfig}

\captionsetup[table]{labelsep=space}   % Supprime les deux-points pour les tables
\captionsetup[figure]{labelsep=space}  % Supprime les deux-points pour les figures

\usepackage{geometry}
\geometry{
    top=2.5cm,
    bottom=2.5cm,
    left=2.5cm,
    right=2.5cm
}

%%%%

%%%%%=============================================================================%%%%
% Define the line numbering style
% Define custom line numbering style

%% Used in table and figure examples below
%\usepackage[caption=false]{subfig}

%%%%%=============================================================================%%%%
%%%%  Remarks: This template is provided to aid authors with the preparation
%%%%  of original research articles intended for submission to journals published 
%%%%  by Springer Nature. The guidance has been prepared in partnership with 
%%%%  production teams to conform to Springer Nature technical requirements. 
%%%%  Editorial and presentation requirements differ among journal portfolios and 
%%%%  research disciplines. You may find sections in this template are irrelevant 
%%%%  to your work and are empowered to omit any such section if allowed by the 
%%%%  journal you intend to submit to. The submission guidelines and policies 
%%%%  of the journal take precedence. A detailed User Manual is available in the 
%%%%  template package for technical guidance.
%%%%%=============================================================================%%%%

%% as per the requirement new theorem styles can be included as shown below
\theoremstyle{thmstyleone}%
\newtheorem{theorem}{Theorem}%  meant for continuous numbers
%%\newtheorem{theorem}{Theorem}[section]% meant for sectionwise numbers
%% optional argument [theorem] produces theorem numbering sequence instead of independent numbers for Proposition
\newtheorem{proposition}[theorem]{Proposition}% 

\theoremstyle{thmstyletwo}%

\theoremstyle{thmstylethree}%
\newtheorem{definition}{Definition}%

\raggedbottom
%%\unnumbered% uncomment this for unnumbered level heads

\begin{document}

\title[Article Title]{Vector Extrapolation Methods Applied To Geometric Multigrid Solvers For Isogeometric Analysis}

%%=============================================================%%
%% GivenName	-> \fnm{Joergen W.}
%% Particle	-> \spfx{van der} -> surname prefix
%% FamilyName	-> \sur{Ploeg}
%% Suffix	-> \sfx{IV}
%% \author*[1,2]{\fnm{Joergen W.} \spfx{van der} \sur{Ploeg} 
%%  \sfx{IV}}\email{iauthor@gmail.com}
%%=============================================================%%

\author*[1,2]{\fnm{Abdellatif} \sur{Mouhssine}}\email{abdellatif.mouhssine@um6p.ma}

\author[1]{\fnm{Ahmed} \sur{Ratnani}}\email{ahmed.ratnani@um6p.ma}
\equalcont{These authors contributed equally to this work.}

\author[2]{\fnm{Hassane} \sur{Sadok}}\email{hassane.sadok@univ-littoral.fr}
\equalcont{These authors contributed equally to this work.} 

\affil*[1]{\orgdiv{The UM6P Vanguard Center}, \orgname{Mohammed VI Polytechnic University}, \orgaddress{\street{Lot 660 Hay Moulay Rachid}, \city{Benguerir}, \postcode{43150}, %\state{State}
\country{Morocco}}}

\affil[2]{\orgdiv{Laboratoire de Math\' ematiques Pures et Appliqu\' ees}, \orgname{Universit\'e du Littoral C\^ote d'Opale}, \orgaddress{\street{50 Rue F. Buisson}, \city{Calais Cedex}, \postcode{B.P. 699 - 62228}, %\state{State}
\country{France}}}

%%==================================%%
%% Sample for unstructured abstract %%
%%==================================%%

\abstract{In the present work, we study how to develop an efficient solver for the fast resolution of large and sparse linear systems that occur while discretizing elliptic partial differential equations using isogeometric analysis. Our new approach combines vector extrapolation methods with geometric multigrid schemes. Using polynomial-type extrapolation methods to speed up the multigrid iterations is our main focus. Several numerical tests are given to demonstrate the efficiency of these polynomial extrapolation methods in improving multigrid solvers in the context of isogeometric analysis.}

\keywords{Multigrid methods, Smoothers, Isogeometric analysis, B-splines, Vector extrapolation methods, Restarted methods}

%%\pacs[JEL Classification]{D8, H51}

%%\pacs[MSC Classification]{35A01, 65L10, 65L12, 65L20, 65L70}

\maketitle

\section{Introduction}\label{sec1}

This paper investigates a potent strategy for applying sophisticated computational techniques to solve complicated problems. We analyze the application of geometric multigrid methods \cite{C1,C2,C3}, which are efficient resources for resolving mathematical problems in a variety of contexts. Geometric multigrid methods (GMG) have a storied history in the field of numerical simulations, particularly in the context of solving linear systems arising from partial differential equations (PDEs). Originally conceived to address simple boundary value problems prevalent in various physical applications, these methods have evolved into a powerful tool for accelerating convergence and enhancing the efficiency of iterative solvers. We incorporate these strategies with isogeometric discretization (IGA) \cite{C4,C5}, a modern technique that makes geometric modeling and numerical analysis easier \cite{C6}. Due to the fact that the matrix size increases the number of iterations required to obtain a certain accuracy, both Krylov \cite{C7} and standard iterative approaches are not optimal. In contrast, multigrid approaches are optimal, which means that their convergence rate is independent of the size mesh. By now, it is well recognized that the basic classical multigrid smoothers do not produce multigrid solvers that have robust convergence rates in the IGA discretization's spline degree \cite{C8,C9,C10,C11,C12,C13}. This convergence worsening, which is actually a common property of conventional multigrid solvers for IGA discretization matrices, is explained in detail by using symbols \cite{C13}. Several investigations have been performed to address this limitation of multigrid techniques. For example, Hofreither and Zulehner \cite{C11} presented the mass-Richardson smoother, a smoother that uses the inverse of the mass matrix as an iteration matrix. A number of additional smoothing steps are necessary for the smoother to achieve robustness in the spline degree due to boundary effects. Moreover, Donatelli et al. \cite{C8,C10,C13} provide a multi-iterative approach that integrates the multigrid method with an efficient Preconditioned Conjugate Gradient (PCG) or Preconditioned GMRES (PGMRES) smoother \cite{C7} with the mass matrix as a preconditioner at the finest level for the fast solution of the linear systems resulting from IGA. This multi-iterative multigrid approach has a robust convergence rate with respect to the spline degree, in addition to being optimal. They demonstrate this multi-iterative method's performance through a series of numerical experiments. However, this approach has restrictions. It is based on preconditioned Krylov subspace methods specifically, which require the inversion of the mass matrix, which is computationally expensive. 
%In addition, it is restricted to linear problems. 
Our suggestion to get over these restrictions is to create a multi-iterative approach that combines vector extrapolation methods \cite{C14,C15,C16,C17,C18,C19} with classical multigrid techniques. 
This hybrid approach has the advantage of being able to be applied in two separate contexts. First, to accelerate the convergence of standard multigrid schemes; in terms of iteration count and CPU time, this approach provides better convergence outcomes than classical multigrid methods. Secondly, it can be used as a preconditioner for solving linear systems arising from the Galerkin discretization of elliptic problems. Furthermore, the vector extrapolation techniques can be employed in the multicorrector stage of the linearization process for nonlinear PDEs.

%The advantage of this hybrid method is the speed up of the convergence of standard multigrid schemes. In terms of iteration count and CPU time, this method should provide better convergence outcomes than traditional multigrid approaches. To make our method even more efficient, it can also be used as a preconditioner for solving elliptic problems. Furthermore, the use of vector extrapolation techniques allows us to address nonlinear issues, giving a more general approach for linear and nonlinear problems.
Of particular importance, we will examine the powers of polynomial-type extrapolation methods, such as the reduced rank extrapolation method (RRE) \cite{C20,C21,C22,C23} and the minimal polynomial extrapolation method (MPE) \cite{C24}. These methods are specifically used to accelerate the convergence of fixed-point iterative techniques for both linear and nonlinear systems of equations. We will introduce the ideas behind these methods, making it easy to understand how they can speed up the convergence of multigrid techniques. As a result, we propose to use polynomial extrapolation methods coupled with multigrid schemes to develop a robust solver for general elliptic equations with variable coefficients, and in a general domain. 
Through practical examples and numerical experiments, we will demonstrate the efficiency of our hybrid solver; we observe that when the spline degree increases, the iteration numbers stay uniformly bounded. Thus, we deduce that our solver which combines multigrid with polynomial extrapolation methods is robust with respect to the spline degree of the IGA discretization.

The organization of this paper is as follows: In Section \ref{sec:multigridsolv}, we start by revisiting various multigrid methods. Next, we provide some insights into IGA, in order to demonstrate the limitations of classical multigrid methods with standard smoothers for solving linear systems resulting from IGA. We show this limitation through a numerical analysis of these multigrid schemes. Since multigrid approaches can be rewritten as fixed-point methods, we introduce in Section \ref{sec:polyextrapol} polynomial extrapolation methods like RRE and MPE and we discuss the advantages of these vector extrapolation methods. Finally, in Section \ref{sec:numericalexperei}, we perform several numerical results to illustrate the efficiency and the performance of our RRE-V-cycle and MPE-V-cycle approaches for solving elliptic problems.

\section{Multigrid methods for IGA}\label{sec:multigridsolv}
Let us consider a linear PDE with Dirichlet boundary conditions. 
\begin{equation}
\begin{array}{rcl}
Lu & = & f, \label{equat1}
\end{array}
\end{equation} 
in a bounded domain $\Omega$, which we will call the unit square for simplicity. The mesh of stepsize $h$ covers the domain $\Omega$. Additionally, assume that the isogeometric analysis approach is used to discretize the equation (\ref{equat1}). This results in a set of linear equations: 
\begin{equation}
\begin{array}{rcl}
A^h u^h & = & b^h, \label{equat2}
\end{array}
\end{equation} 
where $A^h$ is a square nonsingular SPD matrix; it is called the stiffness matrix, $b^h$ is a specified vector that originates from the right-hand side of (\ref{equat1}) and the boundary conditions, and $u^h$ is the vector of unknowns. Let $e^h = u^h- v^h$ be the associated algebraic error, and let $v^h$ be the current approximation of $u^h$ provided by the multigrid iteration. The error is known to satisfy the following residual equation (\ref{equat3}): 
\begin{equation}
\begin{array}{rcl}
A^h e^h & = & r^h, \label{equat3}
\end{array}
\end{equation} 
where $r^h= b^h-A^h v^ h$ is the residual vector. Knowing that the grid $\Omega^h$ of mesh size $h$ is called the ﬁne grid, we deﬁne a coarse grid $\Omega^H$ with mesh size
$2h$. Using multigrid techniques, one may approximate the error on the coarse grid. Therefore, we must solve the coarse grid problem $A^{2h} e^{2h} = r^{2h}$, project the solution onto the fine grid, and update the previous approximation $v^h$. Then $v^h + e^h$
could be a better approximation for the exact solution $u^h$.\\
The mechanism that transforms the data from the coarse grid $\Omega^{2h}$ to the fine grid $\Omega^{h}$ is called the interpolation or prolongation operator $P_{2h} ^{h}$, and the one who transforms the data from the fine grid to the coarse grid is called the restriction operator $R_h ^{2h}$ (which is often chosen as $(P_{2h}^h )^{T}$). These two operators are defined as follows:

$$ P_{2h}^h: \Omega^{2h} \longrightarrow \Omega^{h},$$
and
$$ R_h ^{2h}: \Omega^{h} \longrightarrow \Omega^{2h}.$$
In the multigrid methods, we need to define the coarse grid matrix $A^{2h}$, we can take $A^{2h}$ as the result of the isogeometric discretization to the differential operator on $\Omega^{2h}$, i.e., the $\Omega^{2h}$ version of the initial matrix system's $A^h$. We can also define $A^{2h}$ by the Galerkin projection (it is also called the Galerkin condition): Since the residual equation on the fine grid is given by (\ref{equat3}), we suppose that the error $e^h$ is in the range of the interpolation operator; therefore, there is a vector $e^{2h}$ on $\Omega^{2h}$ such that $e^h= P_{2h}^h e^{2h}$. Hence,
$$A^ h P_{2h}^h e^{2h}= r^h, $$
when we multiply the two sides of this equation by the restriction operator $R_{h}^{2h}$, we obtain
$$  R_{h}^{2h} A^ h P_{2h}^h e^{2h}= R_{h}^ {2h} r^h. $$
If we compare this last equation with $A^{2h} e^{2h} = r^{2h},$ we conclude that the coarse grid matrix is given by (\ref{equat4}):
\begin{equation}
\begin{array}{rcl}
A^{2h} & = & R_{h}^{2h} A^ h P_{2h}^h. \label{equat4}
\end{array}
\end{equation} 

\subsection{Multigrid algorithms}\label{subsubsec1}
We begin with a review of the various classical multigrid algorithms: the Two-Grid method (TG), followed by a larger class of multigrid cycling systems called the $\mu$-cycle approach. We note that in practical applications, only the values of $\mu=1$ resulting in the V-cycle and $\mu=2$ yielding the W-cycle are typically utilized.

First, we introduce the Two-Grid algorithm, a fast iterative approach to solving linear equations resulting from the discretization of a linear PDE. The suggested Two-Grid scheme is as follows, refer to Algorithm~\ref{algo1}.

\begin{algorithm}
\caption{Two-Grid algorithm}\label{algo1}
\begin{algorithmic}[1]
\Require $ A^{h}$, $ b^{h}$, $ v^{h}$: a starting point, $ \nu_1$: the number of relaxations in the pre-smoothing step, $ \nu_2$: the number of relaxations in the post-smoothing step, $ \text{Itermax}$
\Ensure $u^{h}: \text{the approximate solution of}\, A^{h} u^h = b^h $
\State $k \Leftarrow 0$
\While{$k \leq \text{Itermax and not converged}$}
            \State $u^h \Leftarrow  \text{smoother}(A^h,b^h,v^h,\nu_1)$ \Comment{Apply a relaxation method}
            \State $r^h \Leftarrow  b^h- A^h u^h$ \Comment{Compute the residual}
            
            \State $r^{2h}\Leftarrow  R_h^{2h} r^h$ \Comment{Restrict the residual}
            \State $e^{2h} \Leftarrow  \text{smoother}(A^{2h},r^{2h}, e^{2h},\nu_1)$  \Comment{Solve the coarse equation}
            \State $u^h \Leftarrow  u^h + P_{2h}^{h}e^{2h}$  \Comment{Correct the approximation}
            \State $ u^h \Leftarrow  \text{smoother} (A^h,b^h,u^h,\nu_2)$ \Comment{Apply a relaxation method}
            \State $k \Leftarrow  k + 1$
\EndWhile
\end{algorithmic}
\end{algorithm}

Then, we present the $\mu$-cycle algorithm, see Algorithm~\ref{algo2}. It performs many correction iterations at each level of the grid hierarchy, offering additional improvement in convergence for iterative solution methods.

\begin{algorithm}
\caption{$\mu$-cycle algorithm }\label{algo2}
\begin{algorithmic}[1]
\Require $A^{h}, b^{h}, v^{h}, \nu_1, \nu_2, \mu, \text{Itermax}$ 
\Ensure $u^{h}$
\State $k \Leftarrow 0$
\While{$k \leq \text{Itermax and not converged}$}
 \If {$\Omega^h$ is the coarsest grid}
            \State $u^h \Leftarrow (A^h)^{-1} b^h$   \Comment{Direct solve on the coarsest grid}
           
        \Else

            \State $u^h \Leftarrow  \text{smoother}(A^h,b^h,v^h,\nu_1)$ \Comment{Pre-smooth using relaxation}
            \State $r^h \Leftarrow  b^h- A^h u^h$  \Comment{Compute the residual}
            \State $r^{2h}\Leftarrow  R_h^{2h} r^h$  \Comment{Restrict the residual}
            \State $e^{2h} \Leftarrow 0$ \Comment{Initialize coarse grid error}
            \State $e^{2h} \gets \text{MG$\mu$} (A^{2h},r^{2h},e^{2h},\nu_1,\nu_2)$ $\mu$ times  \, \Comment{$\mu$-cycle recursion on coarse grid}
            \State $u^h \Leftarrow  u^h + P_{2h}^{h}e^{2h}$  \Comment{Correct the approximation}
            \State $ u^h \Leftarrow  \text{smoother} (A^h,b^h,u^h,\nu_2)$   \Comment{Post-smooth using relaxation}
\EndIf
\EndWhile
\end{algorithmic}
\end{algorithm}

\subsection{Multigrid: A fixed-point analysis}
We will conduct an analysis of multigrid methods to demonstrate that they can be reformulated as fixed-point methods. Therefore, Proposition~{\upshape\ref{prop1}} gives the iteration matrix of the Two-Grid method.

\begin{proposition}  \label{prop1} 
We denote by  $S_{sm}$ the iteration matrix of the smoother. Hence, the iteration matrix of the Two-Grid method is given by:
 $$ B := S_{2lev}= S_{sm}^{\nu_2}\left ( I-P_{2h}^{h}(A^{2h})^{-1} R_{h}^{2h}A^{h} \right )S_{sm}^{\nu_1},$$
and the Two-Grid sequence is as follows:
 $$u_{k+1}= B u_{k}+c,$$
 where $c = S_{\text{sm}}^{\nu_2} P_{2h}^{h} (A^{2h})^{-1} R_{h}^{2h} b^h.$
\end{proposition}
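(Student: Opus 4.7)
The plan is to unwind one sweep of Algorithm~\ref{algo1} and express the outgoing iterate $u_{k+1}$ as an affine function of the incoming iterate $v^h = u_k$. Each of the three phases---pre-smoothing, coarse-grid correction, post-smoothing---is itself an affine map of the form $u \mapsto M u + q$, and the composition of three affine maps is again affine, so its linear part will yield $B$ and its constant part will yield $c$.

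Concretely, $\nu_1$ applications of the smoother in line~3 contribute the linear factor $S_{sm}^{\nu_1}$. The block of lines~4--7 (residual computation $r^h = b^h - A^h u^h$, restriction $r^{2h} = R_h^{2h} r^h$, exact coarse solve $e^{2h} = (A^{2h})^{-1} r^{2h}$, and correction $u^h \leftarrow u^h + P_{2h}^{h} e^{2h}$) collapses, by direct substitution, to the single affine map
$$u \mapsto \bigl(I - P_{2h}^{h} (A^{2h})^{-1} R_h^{2h} A^h\bigr) u + P_{2h}^{h} (A^{2h})^{-1} R_h^{2h} b^h.$$
Finally, the $\nu_2$ post-smoothing steps in line~8 supply the factor $S_{sm}^{\nu_2}$. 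Composing the three linear parts in the order $S_{sm}^{\nu_1}$, then $I - P_{2h}^{h} (A^{2h})^{-1} R_h^{2h} A^h$, then $S_{sm}^{\nu_2}$, reproduces the stated iteration matrix $B$; the constant term $P_{2h}^{h} (A^{2h})^{-1} R_h^{2h} b^h$ arising in the middle phase is then propagated through the post-smoothing factor to give $c = S_{sm}^{\nu_2} P_{2h}^{h} (A^{2h})^{-1} R_h^{2h} b^h$.

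The main obstacle is the bookkeeping of the inhomogeneous (i.e.\ $b^h$-dependent) parts of the individual smoother iterations: a naive expansion of $\nu_1 + \nu_2$ smoother steps would accumulate a telescoping sum that must be simplified carefully. A cleaner and equivalent route is to exploit the fact that every stage is consistent with the exact solution $u^{\ast} = (A^h)^{-1} b^h$: since each of the three maps fixes $u^{\ast}$, the error $e_k := u^{\ast} - u_k$ satisfies a purely linear recurrence $e_{k+1} = B e_k$, which identifies $B$ with no constants to track. A short verification that $u^{\ast}$ is a fixed point of $u \mapsto B u + c$ then pins down $c$ and closes the argument.
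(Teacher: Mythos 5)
The paper itself gives no argument for Proposition~\ref{prop1} (its ``proof'' is a pointer to \cite{C25}), so your self-contained derivation already does more than the text. Your treatment of the linear part is correct and standard: each phase of Algorithm~\ref{algo1} is an affine map, the coarse-grid block collapses to $u\mapsto\bigl(I-P_{2h}^{h}(A^{2h})^{-1}R_{h}^{2h}A^{h}\bigr)u+P_{2h}^{h}(A^{2h})^{-1}R_{h}^{2h}b^{h}$ (under the reading that the coarse problem is solved exactly, which is what the stated $B$ presupposes even though line~6 of Algorithm~\ref{algo1} as written only smooths it), and composing the three linear parts yields $B$. The observation that every phase fixes $u^{\ast}=(A^{h})^{-1}b^{h}$, so that the error obeys $e_{k+1}=Be_{k}$, is also correct and is indeed the cleanest route to $B$.

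The gap is exactly the bookkeeping you label the ``main obstacle'' and then defer. A consistent smoothing sweep is $u\mapsto S_{sm}u+M^{-1}b^{h}$ for the splitting $A^{h}=M-N$ defining $S_{sm}=I-M^{-1}A^{h}$; the pre- and post-smoothing phases therefore inject their own $b^{h}$-dependent constants into the composition, and these do not cancel. The constant of the composed affine map is $c=(I-B)(A^{h})^{-1}b^{h}$, which exceeds the stated $S_{sm}^{\nu_{2}}P_{2h}^{h}(A^{2h})^{-1}R_{h}^{2h}b^{h}$ by $(I-S_{sm}^{\nu_{2}})u^{\ast}+S_{sm}^{\nu_{2}}\bigl(I-P_{2h}^{h}(A^{2h})^{-1}R_{h}^{2h}A^{h}\bigr)(I-S_{sm}^{\nu_{1}})u^{\ast}$, a quantity that vanishes only for $\nu_{1}=\nu_{2}=0$. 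Consequently the ``short verification that $u^{\ast}$ is a fixed point of $u\mapsto Bu+c$,'' on which you rely to close the argument, actually fails for the stated $c$. A one-dimensional check makes this concrete: take $A^{h}=2$, $b^{h}=2$ (so $u^{\ast}=1$), Richardson smoothing with $S_{sm}=1/2$, $P_{2h}^{h}=R_{h}^{2h}=1$, $A^{2h}=2$, $\nu_{1}=\nu_{2}=1$; then $B=0$ and the stated $c$ equals $1/2$, so $Bu^{\ast}+c=1/2\neq u^{\ast}$, whereas tracing Algorithm~\ref{algo1} gives $u_{k+1}=1$ identically, i.e.\ the true constant is $1=(I-B)u^{\ast}$. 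So your derivation of $B$ stands, but the second half of the proposition is not established as stated: either the constant must be taken as $c=(I-B)(A^{h})^{-1}b^{h}$, or one must (incorrectly for Algorithm~\ref{algo1}) assume the smoothing sweeps contribute no inhomogeneous term. This is worth raising with the authors as well, since the displayed $c$ records only the coarse-correction contribution.
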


\begin{proof}
    See for instance \cite{C25}.
\end{proof}
The levels of the multigrid hierarchy are numbered by $0, . . . , l$, where level $0$ is the coarsest grid. The iteration matrix of the two-level method on level $l$ is denoted by $B_l$ and it has the form 
$$ B_l= S_{sm,l}^{\nu_2}\left ( I-P_{l-1}^{l}(A_{l-1})^{-1} R_{l}^{l-1}A_{l} \right )S_{sm,l}^{\nu_1}.$$
Thus, in the following Proposition~{\upshape\ref{prop2}}, we present the iteration matrix of the multigrid schemes.
\begin{proposition}[\cite{C25}
]\label{prop2}
The iteration matrix for the multigrid method is:
\begin{align*}
B_{mg,l} & = B_l,\;\;\;\;\;\;\;\;\;\;\;\;\; \textit{if}\; l=1,&\\
B_{mg,l}   & = S_{sm,l}^{\nu_2}\left [ I-P_{l-1}^{l}(I-B_{mg,l-1}^{\mu})A_{l-1}^{-1} R_{l}^{l-1}A_{l}\right] S_{sm,l}^{\nu_1}  &\\
 & = B_l+ S_{sm,l}^{\nu_2}P_{l-1}^{l}B_{mg,l-1}^{\mu}A_{l-1}^{-1} R_{l}^{l-1}A_{l}S_{sm,l}^{\nu_1}, \;\;\;\;\;\; \textit{if}\; l>1. &
\end{align*}

The multigrid sequence is defined recursively by:
$$ u_{k+1}= B_{mg,l} u_{k} + S_{sm,l}^{\nu_2} P_{l-1}^{l}  (I - B_{mg,l-1}^{\mu}) A_{l-1}^{-1} R_{l}^{l-1}b.$$

\end{proposition}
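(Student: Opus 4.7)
The plan is to prove Proposition~\ref{prop2} by induction on the level $l$, leveraging the Two-Grid iteration matrix supplied by Proposition~\ref{prop1} as the base case and then showing that the replacement of the exact coarse solve by $\mu$ inner multigrid iterations simply inserts the factor $(I-B_{mg,l-1}^{\mu})$ into the Two-Grid formula.

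For the base case $l=1$, the $\mu$-cycle algorithm in Algorithm~\ref{algo2} collapses to the Two-Grid scheme of Algorithm~\ref{algo1}, since level~$0$ is the coarsest grid and the problem there is solved exactly. Therefore Proposition~\ref{prop1} gives $B_{mg,1}=B_{1}$ and the corresponding $c$, which matches the stated formula because $B_{mg,0}$ never appears.

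For the inductive step $l>1$, assume the result is established for level $l-1$, so that one MG-step on level $l-1$ applied to any system $A_{l-1}x=y$ reads $x_{\text{new}}=B_{mg,l-1}\,x_{\text{old}}+c_{l-1}(y)$. Consistency of the iteration with its fixed point $A_{l-1}^{-1}y$ forces $c_{l-1}(y)=(I-B_{mg,l-1})A_{l-1}^{-1}y$. Starting from the zero initial guess and performing $\mu$ such iterations, a telescoping sum gives the partial correction
$$e^{(\mu)}=\bigl(I+B_{mg,l-1}+\cdots+B_{mg,l-1}^{\mu-1}\bigr)c_{l-1}(y)=\bigl(I-B_{mg,l-1}^{\mu}\bigr)A_{l-1}^{-1}y.$$
This is the key identity: the action of the inner $\mu$-cycle recursion on the coarse residual is exactly the exact solve composed with $\bigl(I-B_{mg,l-1}^{\mu}\bigr)$.

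Now I would trace the outer iteration line by line. Let $u_{k}$ be the current iterate. Pre-smoothing produces $S_{sm,l}^{\nu_{1}}u_{k}+\text{(terms in }b\text{)}$; then the residual $r_{l}=b-A_{l}(\cdot)$ is restricted to level $l-1$, the inner $\mu$ cycles are applied to $A_{l-1}e=R_{l}^{l-1}r_{l}$ with $y=R_{l}^{l-1}r_{l}$, the result is prolongated with $P_{l-1}^{l}$ and added to the current approximation, and finally post-smoothing by $S_{sm,l}^{\nu_{2}}$ is applied. Substituting the identity above in place of the exact solve $A_{l-1}^{-1}R_{l}^{l-1}r_{l}$ used in Proposition~\ref{prop1}, and collecting the homogeneous part (coefficient of $u_{k}$) and the inhomogeneous part (coefficient of $b$), yields
$$B_{mg,l}=S_{sm,l}^{\nu_{2}}\bigl[I-P_{l-1}^{l}(I-B_{mg,l-1}^{\mu})A_{l-1}^{-1}R_{l}^{l-1}A_{l}\bigr]S_{sm,l}^{\nu_{1}},$$
and the stated formula for $u_{k+1}$. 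Expanding the bracket and using $B_{l}=S_{sm,l}^{\nu_{2}}(I-P_{l-1}^{l}A_{l-1}^{-1}R_{l}^{l-1}A_{l})S_{sm,l}^{\nu_{1}}$ from Proposition~\ref{prop1} gives the equivalent additive form $B_{mg,l}=B_{l}+S_{sm,l}^{\nu_{2}}P_{l-1}^{l}B_{mg,l-1}^{\mu}A_{l-1}^{-1}R_{l}^{l-1}A_{l}S_{sm,l}^{\nu_{1}}$.

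The only delicate point, and the one I would be most careful about, is the correct identification of the constant vector $c_{l-1}(y)$ in the level $l-1$ iteration so that the telescoping sum collapses cleanly to $(I-B_{mg,l-1}^{\mu})A_{l-1}^{-1}y$; once this is in hand, the rest is bookkeeping on the pre/post smoothing envelopes and the prolongation/restriction operators.
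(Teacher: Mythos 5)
Your argument is correct, and it is worth noting that the paper itself supplies no proof of Proposition~\ref{prop2} --- it is stated with a bare citation to \cite{C25} --- so your induction is a genuine, self-contained derivation rather than a restatement of anything in the text. The structure you use (base case from Proposition~\ref{prop1}; inductive step replacing the exact coarse solve $A_{l-1}^{-1}$ by the action of $\mu$ inner cycles started from zero, which telescopes to $(I-B_{mg,l-1}^{\mu})A_{l-1}^{-1}$) is the standard proof found in the cited reference and in Hackbusch, and every step checks out: the identity $\sum_{j=0}^{\mu-1}B_{mg,l-1}^{j}\,(I-B_{mg,l-1})=I-B_{mg,l-1}^{\mu}$ needs no invertibility assumption, and your use of consistency to pin down $c_{l-1}(y)=(I-B_{mg,l-1})A_{l-1}^{-1}y$ is legitimate because the level-$(l-1)$ iteration is built from consistent smoothers and a consistent recursion, so its fixed point is $A_{l-1}^{-1}y$ by the inductive hypothesis. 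Two small points deserve explicit mention in a written-up version: first, the zero initial guess on the coarse grid (line ``$e^{2h}\Leftarrow 0$'' in Algorithm~\ref{algo2}) is exactly what makes the geometric sum start at $j=0$ and collapse cleanly --- without it the formula acquires an extra $B_{mg,l-1}^{\mu}e^{(0)}$ term; second, your bookkeeping of the inhomogeneous part tacitly adopts the same convention as Proposition~\ref{prop1}, where the smoothers' own constant contributions are absorbed, which is consistent with the paper but worth flagging since the displayed $c$ only shows the coarse-grid contribution.
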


Next, we give some details of isogeometric analysis and recall the important properties of spline basis functions “B-splines”
 (Proposition~{\upshape\ref{prop3}}).

\subsection{B-splines $\&$ IGA}
In one-dimensional space, a vector of nodes, or “Knots vector,” $\xi_1,\xi_2,...,\xi_{N+p+1}$ is a non-decreasing
set of coordinates in the parameter space, where $N$ is the number of control points and $p$ is
the degree of the spline. If the nodes $\xi_{i}, \; i=1,..., N+p+1$ are equidistant, we say that this
vector of nodes is uniform. If the nodes in the first and last positions are repeated $p+1$
times, we say that this vector of nodes is an “open Knots vector.”.

Given $m$ and $p$ as natural numbers, let us consider a sequence of non-decreasing real numbers: $T= \{t_{i}\}_{0\leq i \leq m}$. T is called the knots sequence. From a knots sequence, we can generate a B-Spline family using the recurrence formula (\ref{moneq}). For more details, see \cite{C26,C27,C28}.

\begin{definition}[B-Splines using Cox-DeBoor Formula]
 The jth B-spline of degree $p$ is defined by the recurrence relation:
 \begin{equation}
    N^p_j = \frac{t-t_j}{t_{j+p}-t_j}  N^{p-1}_j + \frac{t_{j+p+1}-t}{t_{j+p+1}-t_{j+1}}  N^{p-1}_{j+1},
 \label{moneq}
  \end{equation}   
       where 
       $$ N^0_j(t)= \chi_{[t_j,t_{j+1}]}(t),$$
for $0<j<m-p-1.$           
 \end{definition}
%\subsubsection*{$\quad$ B-splines properties and examples} 
\begin{proposition}[B-Splines properties]\label{prop3}
\begin{itemize}
\item Compact support. $N^p_j(t)=0 \, \forall t \notin[t_j,t_{j+p+1}).$ 
 \item If $t \in[t_j,t_{j+1}),$ then only the B-splines $\{ N^p_{j-p},..., N^p_j\} $  are non vanishing at t.  
\item Non-negativity. $ N^p_j(t)\geq 0 \,\,\forall t \in [t_j,t_{j+p+1}).$ 
 \item Partition of the unity. $\sum{N^p_i(t)}=1,\, \forall t \in \mathbb{R}.$ 
\end{itemize}      
\end{proposition}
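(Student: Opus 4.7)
The plan is to establish all four properties simultaneously by strong induction on the spline degree $p$, using the Cox-DeBoor recurrence (\ref{moneq}) as the inductive mechanism and the characteristic function $N^0_j = \chi_{[t_j,t_{j+1}]}$ as the base case. At $p=0$ every claim is essentially a statement about indicator functions of the knot intervals, which is immediate: the support is $[t_j,t_{j+1})$, the only non-vanishing $N^0_k$ on $[t_j,t_{j+1})$ is $N^0_j$ itself, non-negativity is obvious, and partition of unity reduces to the fact that the half-open intervals $\{[t_i,t_{i+1})\}$ tile $\mathbb{R}$.

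For the inductive step, I would prove the compact support property first. Assuming $N^{p-1}_j$ is supported in $[t_j,t_{j+p})$ and $N^{p-1}_{j+1}$ in $[t_{j+1},t_{j+p+1})$, the recurrence expresses $N^p_j$ as a linear combination of these two functions, so its support is contained in the union $[t_j,t_{j+p+1})$. The second claim is then a direct corollary: $N^p_k(t)\neq 0$ forces $t_k\le t<t_{k+p+1}$, and combined with $t\in[t_j,t_{j+1})$ this pins down $j-p\le k\le j$. Non-negativity follows from the same recurrence once one observes that on the support $[t_j,t_{j+p+1})$ the two rational coefficients $\frac{t-t_j}{t_{j+p}-t_j}$ and $\frac{t_{j+p+1}-t}{t_{j+p+1}-t_{j+1}}$ are nonnegative, so $N^p_j$ is a conical combination of nonnegative functions.

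The partition-of-unity step is the one I expect to require the most care. Starting from
\[
\sum_j N^p_j(t) \;=\; \sum_j \frac{t-t_j}{t_{j+p}-t_j}\,N^{p-1}_j(t) \;+\; \sum_j \frac{t_{j+p+1}-t}{t_{j+p+1}-t_{j+1}}\,N^{p-1}_{j+1}(t),
\]
I would reindex the second sum via $k=j+1$, collect both sums under a common index, and check that the resulting coefficient of each $N^{p-1}_j(t)$ telescopes to $\frac{(t-t_j)+(t_{j+p}-t)}{t_{j+p}-t_j}=1$. Applying the inductive hypothesis $\sum_j N^{p-1}_j(t)=1$ then closes the argument. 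The delicate point is that this index shift is clean only when the knot sequence is bi-infinite (or when one restricts $t$ to the interior of the sequence); near the endpoints of a finite knot vector one must exploit the compact support result established in the first step to ensure that only finitely many terms with a full recurrence contribute, and handle the boundary indices by using an open knot vector so that repeated end knots make the vanishing coefficients harmless. Once that bookkeeping is done, the four properties follow together from the induction.
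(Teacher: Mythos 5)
Your proposal is correct, but it cannot be compared line-by-line with an argument in the paper, because the paper does not prove Proposition~3 at all: it simply refers the reader to the isogeometric analysis literature (the reference cited after the statement). What you have written is the standard self-contained proof --- simultaneous strong induction on the degree $p$ driven by the Cox--de Boor recurrence (\ref{moneq}), with the indicator functions $N^0_j=\chi_{[t_j,t_{j+1}]}$ as base case --- and it is essentially the argument found in the cited textbook, so in substance you have supplied the proof the paper outsources. All four inductive steps are sound: support of $N^p_j$ inside $[t_j,t_{j+p+1})$ from the union of the supports of $N^{p-1}_j$ and $N^{p-1}_{j+1}$; the local-support corollary $j-p\le k\le j$ for $t\in[t_j,t_{j+1})$; non-negativity from the conical combination; and the reindex-and-telescope computation $\frac{t-t_j}{t_{j+p}-t_j}+\frac{t_{j+p}-t}{t_{j+p}-t_j}=1$ for partition of unity. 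Two small points deserve explicit mention if you write this up. First, since the paper works with open knot vectors, denominators such as $t_{j+p}-t_j$ can vanish; you should state the usual convention that any term with a zero denominator is dropped (interpreted as $0$), which also keeps your non-negativity and telescoping arguments valid at repeated knots. Second, the partition-of-unity claim as stated ``$\forall t\in\mathbb{R}$'' is only literally true where a full set of $p+1$ consecutive B-splines is available (on the parametric domain for an open knot vector, or for a bi-infinite knot sequence); your closing remark about boundary bookkeeping is exactly the right caveat, and making it explicit would actually sharpen the statement relative to the paper's own formulation.
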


\begin{proof}
    For the proof of these properties, see \cite{C4}.
\end{proof}

Examples of B-spline basis functions with different degrees $p$ are given in Figure \ref{bsplines examples}.
  \begin{figure}[h]
      \centering
      \includegraphics[width=5cm,height=5cm]{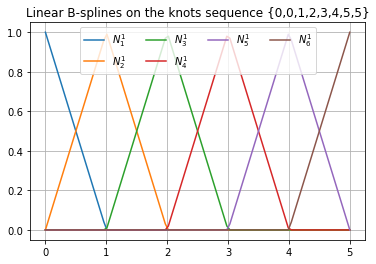}
    \includegraphics[width=5cm,height=5cm]{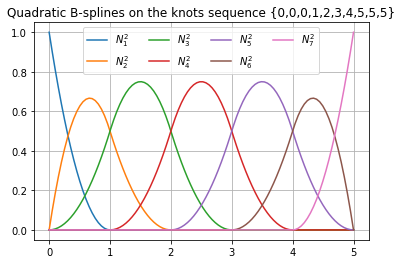}
     \includegraphics[width=5cm,height=5cm]{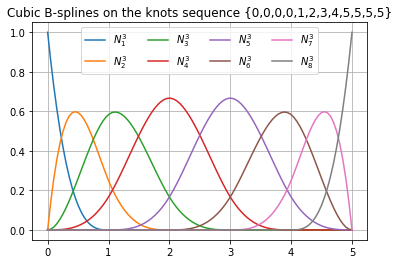}
      \caption{B-spline basis functions of order $p=1,2,3$}
      \label{bsplines examples}
  \end{figure}

\subsection{Investigating the limitations of standard multigrid techniques}

The limitations of classical multigrid methods will be discussed in this section in the context of isogeometric analysis. In particular, we will identify how p-dependence affects classical multigrid schemes used on B-spline spaces in IGA. Additionally, p-pathology's effects through symbol analysis are discussed in depth in \cite{C13,C26,C8}. Because of this, when using a standard multigrid method to solve, for high spline degree's $p$, the linear system resulting from IGA discretization, we anticipate fundamental challenges, in particular a slow (but optimal) convergence rate.
%The study will clarify the existence of numerical singularities particularly at ±$\pi$, that are characterized by infinite stiffness symbols.
We begin by demonstrating these restrictions on the $1D$ Poisson issue solutions. 
%On one hand, the graph of the symbol $f_p$ (for the definition of the symbol and its properties see \cite{C7,C11,C12,C20}), normalized by its maximum $M_{f_p}$, is displayed in Figure~\ref{pathological behavior} for $p$ values ranging from $1$ to $8$. As $p$ approaches $\infty$, the value of $f_p(\pi )/M_{f_p}$ falls exponentially to zero. Numerically speaking, we can state that for high values of spline degree $p$, the normalized symbol $f_p/M_{f_p}$ disappears at both its "mirror point" of $\theta = \pi$ and $\theta= 0$. Because of this, when using a standard multigrid method to solve, for high spline degree's $p$, the linear system resulting from IGA discretization, we anticipate fundamental challenges, in particular a slow (but optimal) convergence rate.
We will prove numerically that the convergence of the classical multigrid schemes is dependent on the spline degree. Thus, we applied first of all the V($\nu_{1}$,$\nu_{2}$)-cycle\,(where  $\nu_{1}$, $\nu_{2}$\; are the relaxation sweeps before/after the correction step) with Weighted-Jacobi \cite{C29} to solve the $1D$ Poisson problem (\ref{chj}) on fine grids with $N=16,32,64,128,256$ points. For a number of levels, $\text{nlevels}=4$ and for each spline degree $p$ a tolerance of $10^{-12}$ is set and the number of iterations (“iter”), the discrete $L_{2}$ norms of the residual (“$L_{2}\text{-res}$”) and of the error (“$L_{2}\text{-err}$”) are reported in Table \ref{table1}.

\begin{equation}\label{chj}
\left\{
\begin{array}{rrrrr}
-u_{xx} &=&f(x),&\quad \text{in}\quad \Omega,\quad \\
u&=&0,&\quad \text{on} \quad \partial \Omega,\quad \\
\end{array}
\right.
\vspace{-0.25cm}
\end{equation} \\
where\, $\Omega = \{x :0 <x< 1\}$ \,and $f(x)=  (2\pi)^2\sin(2\pi x)$. Knowing that the analytical solution to this problem is, $$u(x)=\sin(2\pi x).$$

\begin{table}[h!]
\centering
  \caption{ Convergence of V-cycle(1,1) scheme in $1D$ with nlevels=4}\label{table1}
		\begin{tabular}{| p{0.7cm}|p{1cm}|p{1.25cm}|p{1.25cm}|p{1.25cm}|p{1.25cm}|p{1.25cm}|}
			\hline
			Grid & & $p=2$& $p=3$ &  $p=4$& $p=5$ &$p=6$\\
			%m+n & 700 & 900 & 1200 & 2000& 3000\\  
			% $\alpha$ & $\alpha=0.1$& $\alpha=1$    & $\alpha=0.1$  & $\alpha=1$   \\   
			\hline
			$16$& iter $L_{2}\text{-res}$ $L_{2}\text{-err}$ &  6\hspace{0.8cm} 3.01e-14    2.18e-04  &11\hspace{0.8cm}  6.66e-14      1.60e-05 &  22\hspace{0.8cm} 2.82e-13      1.03e-06 &  39 \hspace{0.9cm} 6.86e-13 6.76e-08 & 87   
			\hspace{0.8cm}   9.16e-13 4.19e-09
			\\
			%& & & & &  	\vspace{0.1cm}
			\hline 
			$32$&iter $L_{2}\text{-res}$ $L_{2}\text{-err}$ &   6\hspace{0.8cm} \hspace{0.1cm}   9.26e-14    2.61e-05  &10\hspace{0.8cm}   5.95e-13  9.49e-07 &20\hspace{0.8cm}  7.38e-13    3.02e-08 &40\hspace{0.8cm} 7.09e-13      9.64e-10& 81\hspace{0.8cm}  9.91e-13  2.93e-11         			\\
			\hline
			$64$&iter $L_{2}\text{-res}$ $L_{2}\text{-err}$ &   6\hspace{0.8cm} \hspace{0.1cm}   4.00e-14       3.23e-06 
   &10\hspace{0.8cm}  5.27e-13 5.85e-08 &20\hspace{0.8cm}  5.89e-13        9.31e-10  &40\hspace{0.8cm} 6.14e-13        1.46e-11 & 81 \hspace{0.8cm} 7.41e-13   2.26e-13      \\
			%& & & & &  	\vspace{0.1cm}
			\hline  
			$128$&iter $L_{2}\text{-res}$ $L_{2}\text{-err}$ &   6\hspace{0.8cm} \hspace{0.1cm}  1.04e-14     4.02e-07 &10\hspace{0.8cm}  3.09e-13   3.64e-09 &20\hspace{0.8cm}  3.72e-13     2.90e-11 &39\hspace{0.8cm} 8.65e-13        2.28e-13& 79\hspace{0.8cm}   7.9e-13   3.5e-14      
			\\
            \hline 
            $256$&iter $L_{2}\text{-res}$ $L_{2}\text{-err}$ &  8\hspace{0.8cm} \hspace{0.1cm}  8.16e-13     5.03e-08 &13\hspace{0.8cm} 3.91e-13 2.27e-10 &19\hspace{0.8cm} 9.81e-13       9.18e-13 &38\hspace{0.8cm}   9.45e-13       5.38e-14& 78\hspace{0.8cm}    7.65e-13      
      2.45e-14    
			\\
           \hline 
		\end{tabular}
\end{table}

In Figure \ref{figure2} and Figure \ref{figureWcycle}, we have plotted the $L_2$ norms of the residual of the V-cycle and the W-cycle for spline degrees ranging from 2 to 7. These experiments were conducted at different levels of discretization, “nlevels”  to examine how the residual behaves as the spline degree varies. Notably, we observed that as the degree of B-spline basis functions increased, the number of iterations required to achieve convergence also increased. This phenomenon suggests a deterioration in the convergence behavior of the multigrid method with higher spline degrees \cite{C8,C11,C9,C10,C30}. Interestingly, this trend held consistent even when altering the number of discretization levels. These findings underscore the sensitivity of the multigrid method's convergence to the complexity of the basis functions, highlighting the need for careful consideration of spline degrees in isogeometric analysis applications. We notice also that we have the same results when we use the other standard relaxation schemes, Jacobi and Gauss-Seidel, as smoothers in the multigrid methods (V-cycle or W-cycle).

\begin{figure}[h]
\centering
    \includegraphics[width=5cm,height=5cm]{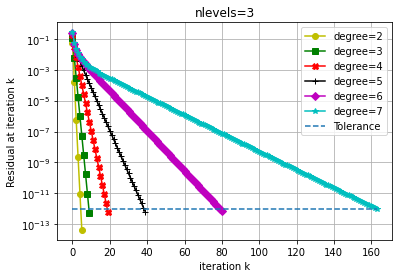}
    \includegraphics[width=5cm,height=5cm]{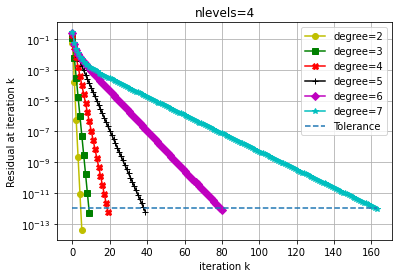}
    \includegraphics[width=5cm,height=5cm]{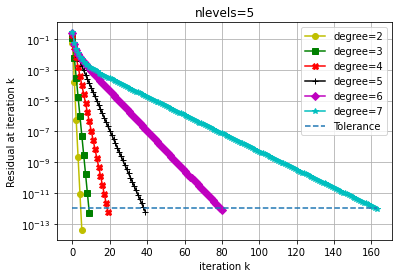}
   % \captionsetup{justification=centering}
    \caption{Performance of the V-cycle scheme in $1D$ with $N=64$}
    \label{figure2}
\end{figure}

\begin{figure}[ht!]
\centering
    \includegraphics[width=5cm,height=5cm]{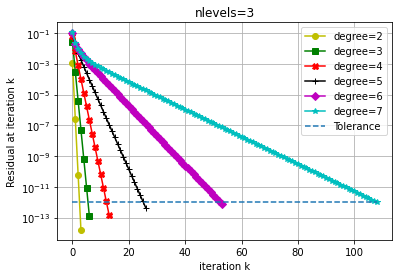}
    \includegraphics[width=5cm,height=5cm]{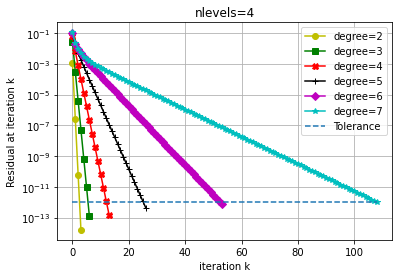}
    \includegraphics[width=5cm,height=5cm]{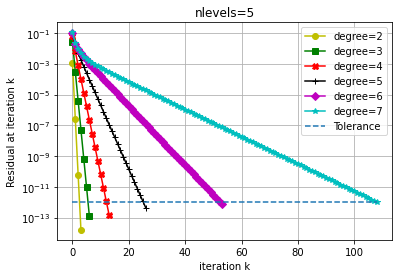}
    %\captionsetup{justification=centering}
    \caption{Performance of the W-cycle scheme in $1D$ with $N=64$}
    \label{figureWcycle}
\end{figure}

\subsection{Multigrid iterations}
For purposes of simplicity, the sequence produced by the multigrid methods (refer to Algorithm~\ref{algo1} and Algorithm~\ref{algo2}) will be noted $(s_{k})_k$. Therefore, multigrid methods can be written as a fixed-point iteration method:
\begin{equation}\label{eq:fixedpoint}
s_{k+1}= G(s_{k}),
\end{equation}
where $s_{0}$ is a starting point and $G$ (which depends on the parameters of the method) is defined in Proposition~{\upshape\ref{prop1} for the Two-Grid scheme and in Proposition~{\upshape\ref{prop2} for the multigrid method.

%Using Proposition~{\upshape\ref{prop2}, we can define the iterations $u^{(k)}$ of the multigrid methods as follows:
%\begin{equation}
  %u^{(k+1)}= B_{mg,l} u^{(k)}+b,  
 % \label{eq:mgfixedpoint}
%\end{equation}
%where $u^{(0)}$ is a starting point and $B_{mg,l} $ is the iteration matrix of the multigrid methods described in Proposition~{\upshape\ref{prop2}.

%Let $u^{(0)}$ be a starting point. Assume that $B_{mg,l} $ is the iteration matrix of the multigrid methods described in Proposition~{\upshape\ref{prop2} and that the vectors $u^{(k)}$ are produced by an iterative procedure (\ref{eq:mgfixedpoint})

%we create the new sequence $t_{0,k}$ by using the MPE and RRE algorithms described before on this sequence $(u^{(k)})_{k}$.\\

\section{Vector extrapolation methods}   \label{sec:polyextrapol}
In this section, we will first review various vector polynomial extrapolation methods to apply them to geometric multigrid methods within the context of IGA.

Let $(s_k)_{k\in \mathbb{N}}$ be a sequence of vectors of $\mathbb{R}^{\mathbb{N}}$, produced by a linear or nonlinear fixed point iteration, starting with a vector $s_0$.

\begin{equation*}
s_{k+1}= G(s_{k}),\; k=0,1,2,...
\end{equation*}
which solution is denoted by $s$. We consider the transformation $T_k$ defined by:
\[
\setlength\arraycolsep{0pt}
T_k\colon \begin{array}[t]{ >{\displaystyle}r >{{}}c<{{}}  >{\displaystyle}l } 
         \mathbb{R}^{\mathbb{N}}  &\to& \mathbb{R}^{\mathbb{N}}\\ 
          s_k &\mapsto& t_{k,q},
         \end{array}
\]
with
\begin{equation}
   t_{k,q}= \sum_{j=0}^{q} \gamma_{j}^{(q)}s_{k+j}, 
   \label{eq:newsequence}
\end{equation}
$t_{k,q}$ is the approximation produced by the reduced rank or by the minimal polynomial extrapolation methods of the limit or the anti-limit of $s_k$ as $k\rightarrow \infty$, where 
$$ \sum_{j=0}^{q} \gamma_{j}^{(q)}=1 \;\; \text{and}\; \sum_{j=0}^{q}\eta_{i,j} \gamma_{j}^{(q)}=0,\; i=0,...,q-1,$$
with the scalars $\eta_{i,j}$ defined by:
$$\eta_{i,j}= \begin{cases}
(\Delta^{2}s_{k+i},\Delta ^{2}s_{k+j}),& \text{for the RRE},   \\
 (\Delta s_{k+i},\Delta s_{k+j}),& \text{for the MPE.}  \\
  \end{cases}$$
$\Delta s_{k}$ and $\Delta^{2}s_{k}$ denote, respectively, the first and second forward differences of $s_k$ and are defined by:

\begin{equation*}
\begin{array}{rcl}
\Delta s_{k} & = & s_{k+1}- s{_k},\;\;\;\;\;\;\; \;\;k=0,1,...\\
\Delta ^ 2 s_{k}& = &  \Delta s_{k+1}-\Delta s_{k} ,\;\;\;k=0,1,...
\end{array}
\end{equation*}
Let us introduce the matrices:
$$ \Delta ^{i}S_{k,q}=[\Delta ^{i}s_{k},...,\Delta ^{i}s_{k+q-1}], \,i=1,2.$$
Using Schur’s formula, the approximation $t_{k,q}$ can be written in matrix form as 
\begin{equation}
t_{k,q}= s_{k}-\Delta S_{k,q}\left({ Y_{q}}^{T} \Delta^{2} S_{k,q} \right)^{-1} { Y_{q}}^{T}\Delta s_{k},
    \label{eq:extrapolapprox}
\end{equation}
where 
$$Y_q= \begin{cases}
\Delta^2 S_{k,q},& \text{for the RRE},   \\
 \Delta S_{k,q},& \text{for the MPE.}  \\
  \end{cases}$$
In particular, for the RRE, the extrapolated approximation $t_{k,q}$ is given by:
\begin{equation}
    t_{k,q}= s_{k}-\Delta S_{k,q}\Delta^{2} S_{k,q}^{+} \Delta s_{k},
    \label{eq:rreapproxim}
\end{equation}
where $\Delta^{2} S_{k,q}^{+}$ is the Moore-Penrose generalized inverse of $\Delta^{2} S_{k,q}$ defined by:
$$  \Delta^{2} S_{k,q}^{+}=\left({\Delta^{2} S_{k,q}}^{T} \Delta^{2} S_{k,q} \right)^{-1} {\Delta^{2} S_{k,q}}^{T}. $$
Let $\tilde{T_k}$ be the new transformation from $T_k$ given by: 
\[
\setlength\arraycolsep{0pt}
\tilde{T_k}\colon \begin{array}[t]{ >{\displaystyle}r >{{}}c<{{}}  >{\displaystyle}l } 
         \mathbb{R}^{\mathbb{N}}  &\to& \mathbb{R}^{\mathbb{N}}\\ 
          s_k &\mapsto& \tilde{t}_{k,q},
         \end{array}
\]
with 
$$\tilde{t}_{k,q}= \sum_{j=0}^{q} \gamma_{j}^{(q)}s_{k+j+1},$$
$\tilde{t}_{k,q}$ is a new approximation of the limit or the anti-limit of $s_k, k\rightarrow \infty$.
Therefore, the generalized residual of $t_{k,q}$ has been defined as follows:

\begin{equation}
\begin{array}{rcl}
\tilde{r}(t_{k,q}) & = & \tilde{t}_{k,q}- t_{k,q} \\
& = &  \Delta s_{k}-\Delta^{2} S_{k,q}\left({ Y_{q}}^{T} \Delta^{2} S_{k,q} \right)^{-1} { Y_{q}}^{T}\Delta s_{k}.
\end{array}
\label{eq:resgeneralized}
\end{equation} 

\subsection{Polynomial extrapolation algorithms}
An effective and reliable implementation for the reduced rank and the minimal polynomial extrapolation methods was provided by \cite{C31}. See \cite{C32,C33} to examine other recursive methods suggested in the literature for implementing these polynomial vector extrapolation strategies.\\
In the following, Algorithm~\ref{algo3} describes the RRE method \cite{C22,C23,C34}, and the MPE method \cite{C31} is implemented in full in Algorithm~\ref{algo4}.

\begin{algorithm}
\caption{The RRE method}\label{algo3}
\begin{algorithmic}[1]
\Require $\text{Vectors} \,s_{k}, s_{k+1},...,s_{k+q+1}$
\Ensure $t_{k,q}^{\text{RRE}}: \text{the RRE extrapolated  approximation} $
\State $ \text{Compute}\, \Delta s_{i}= s_{i+1}-s_{i},\,\text{for} \, i=k,k+1,...,k+q$
\State $\text{Set } \Delta S_{q+1}=[\Delta s_{k},\Delta s_{k+1},...,\Delta s_{k+q}]$
\State $\text{Compute the QR factorization of}\, \Delta S_{q+1}, \text{namely}, \,\Delta S_{q+1}=Q_{q+1}R_{q+1}$
\State $ \text{Solve the linear system}\, R_{q+1}^{T}R_{q+1}d=e, \,\text{where} \,d=[d_0,...,d_q]^{T} \,\text{and}\, e=[1,...,1]^{T}$
\State $ \text{Set}\, \lambda=(\sum_{i=0}^{q}d_{i})\, \text{and } \,\gamma_{j}= \frac{1}{\lambda}d_{i}, \,\text{for} \,i=0,...,q$
\State $ \text{Compute } \alpha=[\alpha_0,\alpha_1,...,\alpha_{q-1}]^{T}\, \text{where}\, \alpha_0=1-\gamma_0 \,\text{and } \,\alpha_{j}=\alpha_{j-1}-\gamma_{j} \,\text{for} \,j=1,...,q-1$
\State $ \text{Compute}\, t_{k,q}^{\text{RRE}}=s_k+ Q_{q}(R_{q}\alpha)$
\end{algorithmic}
\end{algorithm}

\begin{algorithm}
\caption{The MPE method}\label{algo4}
\begin{algorithmic}[1]
\Require $\text{Vectors} \,s_{k}, s_{k+1},...,s_{k+q+1}$
\Ensure $t_{k,q}^{\text{MPE}}: \text{the MPEE extrapolated  approximation} $
\State $ \text{Compute}\, \Delta s_{i}= s_{i+1}-s_{i},\,\text{for} \, i=k,k+1,...,k+q$
\State $\text{Set } \Delta S_{q+1}=[\Delta s_{k},\Delta s_{k+1},...,\Delta s_{k+q}]$
\State Compute the QR factorization of $\Delta S_{q+1}$, namely, \,$\Delta S_{q+1}=Q_{q+1}R_{q+1} (\Delta S_{q}=Q_{q}R_{q}$ is contained in \, $\Delta S_{q+1}=Q_{q+1}R_{q+1})$
\State Solve the upper triangular linear system $R_{q}d=-r_q,$ \,\text{where} \,$d=[d_0,...,d_q]^{T}$ \,\text{and}\,$ r_{q}=[r_{0q},r_{1q},...,r_{q-1,q}]^{T}$
\State $ \text{Set}\,d_q=1 \text{and compute}\, \lambda=(\sum_{i=0}^{q}d_{i})\, \text{and set} \,\gamma_{j}= \frac{1}{\lambda}d_{i}, \,\text{for} \,i=0,...,q$
\State $ \text{Compute } \alpha=[\alpha_0,\alpha_1,...,\alpha_{q-1}]^{T}\, \text{where}\, \alpha_0=1-\gamma_0 \,\text{and } \,\alpha_{j}=\alpha_{j-1}-\gamma_{j} \,\text{for} \,j=1,...,q-1$
\State $ \text{Compute}\, t_{k,q}^{\text{MPE}}=s_k+ Q_{q}(R_{q}\alpha)$
\end{algorithmic}
\end{algorithm}

\subsection{Restarted extrapolation methods}
The methods given in Algorithm \ref{algo3} and Algorithm \ref{algo4} become very expensive as $q$ increases. For the RRE and MPE methods in their complete forms, the number of calculations required increases quadratically with the number of iterations $q$, and the storage cost increases linearly. We find out through Sadok's work \cite{C17} that the number of operations (additions and multiplications) and the amount of storage needed to compute the approximation $t_{k,q}$ with RRE and MPE are, respectively, $2Nq^{2},$ and $q+1$, where $N$ is the dimension. In order to solve linear systems of equations, $q+1$ Mat-Vec products with RRE and MPE are needed. A good way to keep storage and computation costs as low as possible is to restart these algorithms periodically. We denote the restarted versions of the extrapolation techniques by RRE($q$) and MPE($q$). Algorithm \ref{algo5} describes the restarted method.

\begin{algorithm}
\caption{Restarted Extrapolation Algorithm}\label{algo5}
\begin{algorithmic}[1]
\Require $q$: the restart number, $s_0$: an initial vector, $\text{Itermax}$: the maximum number of iterations for the restarted algorithm to converge
\Ensure $t_{q}: \text{the extrapolated  approximation} $
\State $k \Leftarrow 0$
\While{$k \leq \text{Itermax and not converged}$}
\While{$i \leq q$}
 \State $ \text{Compute }\, s_{i+1}  \text{by} \,(\ref{eq:fixedpoint})$
 \State $i \Leftarrow i + 1$
\EndWhile
\State  $\text{Calculate}\,t_{q}\, \text{using one of the desired Algorithm}\, \ref{algo3}\; \text{or Algorithm}\,\ref{algo4}$
\If{$t_q\, \text{satisfies accuracy test}$}
        \State $\textbf{return}\, t_q $
\Else
        \State $s_0 \Leftarrow t_q$
        \State $k \Leftarrow k+ 1$
\EndIf
\EndWhile

\end{algorithmic}
\end{algorithm}

\section{Numerical experiments}   \label{sec:numericalexperei}
In the next sections, we are interested in the application of polynomial extrapolation methods like RRE and MPE as convergence accelerators on the multigrid method to solve the Poisson problem with isogeometric discretization in one (\ref{chj}) and two (\ref{jdd}) dimensions.

\subsection{The $1D$ Poisson problem}
%\subsection*{$\quad$\rom{4}.1 Applications of the extrapolation methods for accelerating multigrid iterations}
 Now, we use the V-cycle(1,1) with Weighted-Jacobi as a smoother with $\omega=2/3$ to solve the $1D$ Poisson problem (\ref{chj}) with IGA, and we show the performance of the RRE and MPE methods as an accelerator of the V-cycle iterations. The methods that follow are RRE($q$)-V-cycle and MPE($q$)-V-cycle, where $q$ is the restart number. We implemented our codes in a sequential manner. The stopping criterion is the residual with a tolerance of $10^{-12}$, and the initial guess is the zero vector. We mention that in all numerical results, “iter” indicates the number of iterations covered by the V-cycle method and the number of cycles covered by the restarted RRE and MPE methods with $q$ iterations of the V-cycle. However, the overall number of iterations in the restarted MPE and RRE methods, including the $q$ multigrid iterations within each cycle, is referred to as the “Global Iteration Count.”.

Figure \ref{figureres} and Figure \ref{fig3} display plots of the Euclidean norm of the residual and the $L_2$ norm of the error, both as functions of the global iteration number for the V-cycle, the RRE($q$)-V-cycle, and MPE($q$)-V-cycle methods. It's worth noting that in these experiments, $N=64$ was fixed for different spline degrees $p=6$, $p=7$ and $p=8$. The results demonstrate that the RRE(4)-V cycle and MPE(4)-V-cycle approaches have higher convergence rates than the V-cycle scheme, and the convergence results of both extrapolation techniques are comparable. Furthermore, we observe that convergence is enhanced in restarted extrapolation techniques by increasing the number of steps $q$.

\begin{figure}[ht!]
\centering
\includegraphics[width=5cm,height=5cm]{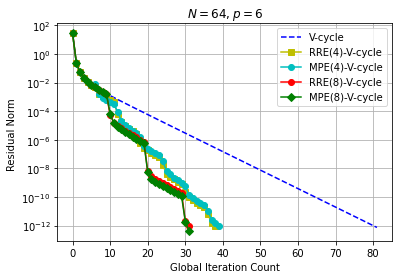}
\includegraphics[width=5cm,height=5cm]{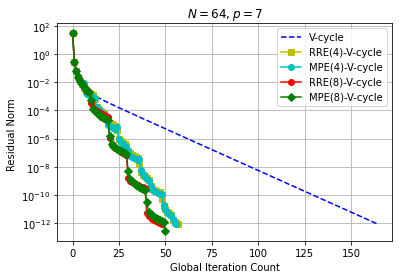}
    \includegraphics[width=5cm,height=5cm]{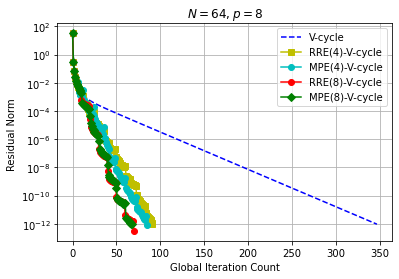}
  \caption{The norms of the residual of the $1D$ Poisson problem as a function of the iteration number}
 \label{figureres}
\end{figure}

\begin{figure}[ht!]
\centering
    %\includegraphics[width=6.2cm,height=6.2cm]{extrapolation+V-cycle_residual.png}
    %\includegraphics[width=6.2cm,height=6.2cm]{extrapolation+V-cycle_error.png}
    %\captionsetup{justification=centering}
   
    \includegraphics[width=5cm,height=5cm]{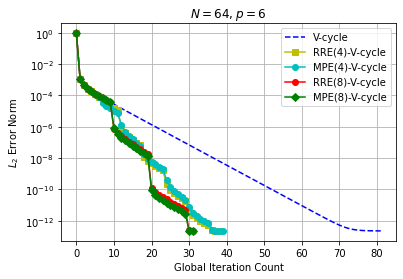}
    \includegraphics[width=5cm,height=5cm]{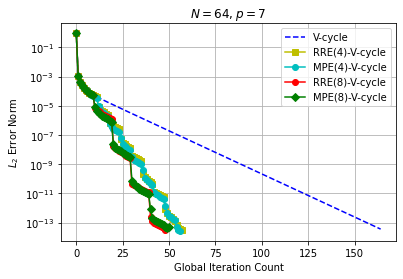}
    \includegraphics[width=5cm,height=5cm]{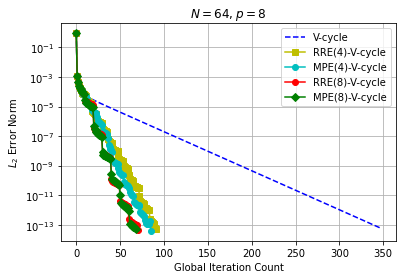}
    \caption{Error evolution of the $1D$ Poisson problem as a function of the iteration number}
    \label{fig3}
\end{figure}

This time, in the following Figure \ref{figure4} and Figure \ref{figurebehavq}, we fix the spline degree $p=7$ and we vary the number of steps $q$ (V-cycle iterations) in the restarted extrapolation methods, and we compare at each time the cycle number for RRE and MPE methods. It can be seen that the number of cycles in RRE and MPE decreases as we increase the V-cycle iterations $q$. Regarding the two polynomial extrapolation techniques, this behavior is comparable, see Figure \ref{figure4} and Figure \ref{figurebehavq}.

\begin{figure}[ht!]
    \centering
    \includegraphics[width=6.2cm,height=6.2cm]{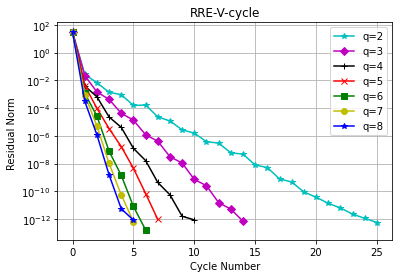}
    \includegraphics[width=6.2cm,height=6.2cm]{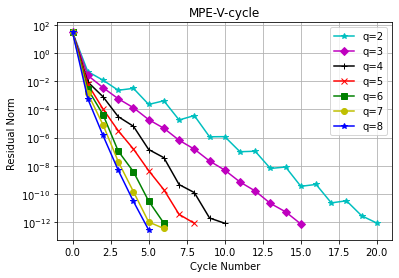}
   \caption{ Convergence results of restarted RRE and MPE combined with V-cycle for different choices of the restart number $q$. $p=7$, $N=64$}
   \label{figure4}
\end{figure}

\begin{figure}[ht!]
    \centering
    \includegraphics[width=6.2cm,height=6.2cm]{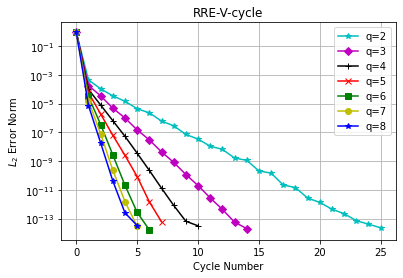}
    \includegraphics[width=6.2cm,height=6.2cm]{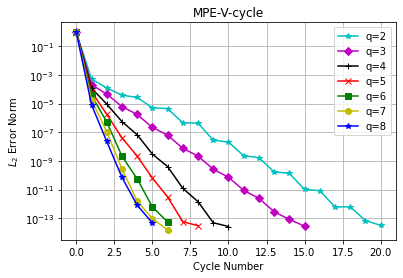}
   \caption{ $L_2$ error evolution of restarted RRE and MPE combined with V-cycle}
   \label{figurebehavq}
\end{figure}

In the following Table \ref{table4}, we aim to conduct a convergence comparison among several methods: the V-cycle, the W-cycle, the V-cycle accelerated by the RRE method (RRE($q$)-V-cycle), and by the MPE method (MPE($q$)-V-cycle).

\begin{center}
\begin{table}[ht!]
	 \caption{ Convergence efficiency comparison of multigrid schemes and its combination with extrapolation methods in $1D$ with $N=64$}
  \label{table4}	
\begin{tabular}{ |c|c|c|c|c|c| } 
\hline
Spline degree & Method &iter & $L_{2}\text{-res}$  & $L_{2}\text{-err}$ & CPU(s)\\
\hline
\multirow{3}{4em}{$p=2$} & V-cycle & 6& 4.00e-14& 3.23e-06 & 0.27 \\ 

& W-cycle & 4& 1.59e-14 &3.23e-06 & 0.39\\ 
&  RRE($q=4$)-V-cycle  & 2&  4.84e-14& 3.23e-06 & 0.30 \\ 
&  MPE($q=4$)-V-cycle  & 2 & 4.81e-14& 3.23e-06 & 0.29\\ 
%&  MPE(q=8)/V-cycle } & 2& 1.47e-14& 3.65e-07 & 0.34\\ 
\hline
\multirow{3}{4em}{$p=3$} & V-cycle & 10 &  5.27e-13&  5.85e-08 & 0.74\\ 
& W-cycle &7&   1.21e-13 &5.85e-08 & 1.01 \\ 
&  RRE($q=4$)-V-cycle  & 2&  4.87e-13& 5.85e-08 & 0.68 \\ 
& MPE($q=4$)-V-cycle  & 2 & 4.81e-13& 5.85e-08 & 0.71\\ 
%& MPE (q=8)/V-cycle } & 2& 4.38e-14& 9.14e-08 & 0.53\\ 
\hline
\multirow{3}{4em}{$p=4$} & V-cycle &20 &   5.89e-13& 9.31e-10 & 2.21\\ 
& W-cycle & 14 &   1.49e-13 & 9.31e-10 & 4.15\\ 
&  RRE($q=8$)-V-cycle  & 2&  7.24e-13& 9.31e-10 & 1.79 \\ 
&  MPE($q=8$)-V-cycle  & 2& 6.86e-13& 9.31e-10 & 1.32\\ 
\hline
\multirow{3}{4em}{$p=5$} & V-cycle & 40 & 6.14e-13&  1.46e-11 & 7.13\\ 
& W-cycle & 27& 4.37e-13&  1.46e-11 & 9.28\\ 
& RRE($q=8$)-V-cycle  & 2 &9.89e-13 &1.46e-11 & 3.56\\ 
& MPE($q=8$)-V-cycle  & 3& 3.24e-13& 1.46e-11 & 3.83\\ 
\hline
\multirow{3}{4em}{$p=6$} & V-cycle & 81 &   7.48e-13&  2.26e-13 & 19.31\\ 
& W-cycle &  54&  7.45e-13 & 2.26e-13 & 27.85 \\ 
& RRE($q=8$)-V-cycle & 4& 9.21e-13 &2.26e-13 & 6.37\\ 
& MPE($q=8$)-V-cycle & 4& 4.28e-13&  2.25e-13 & 6.43 \\
\hline
\multirow{3}{4em}{$p=7$} & V-cycle & 164 &   9.20e-13& 3.61e-14& 51.10\\ 
& W-cycle & 109&   9.95e-13 & 3.90e-14 & 71.94 \\ 
& RRE($q=8$)-V-cycle & 5 & 8.38e-13  &3.33e-14 & 12.24\\ 
& MPE($q=8$)-V-cycle  & 5& 2.84e-13& 4.84e-14 & 12.72 \\
\hline
\multirow{3}{4em}{$p=8$} & V-cycle &347& 9.66e-13&  3.00e-14 & 141.41 \\ 
& W-cycle & 232& 9.20e-13 & 3.71e-14 & 199.82\\ 
& RRE($q=8$)-V-cycle & 7&  3.22e-13 &3.18e-14 & 23.86\\ 
& MPE($q=8$)-V-cycle  &  7& 9.50e-13& 3.49e-14 & 22.81\\
\hline
\end{tabular}
\end{table}
\end{center}

We conclude that the convergence of standard multigrid methods with classical smoothers is enhanced when polynomial extrapolation techniques are combined with them. Better convergence outcomes are achieved in terms of CPU time and iteration count, especially for higher spline degrees.

\subsection{The $2D$ Poisson equation model}
We consider the $2D$ Poisson problem:
\begin{equation}\label{jdd}
\left\{
\begin{array}{rrrrr}
-\text{div}\;(\mathbf{\nabla}u) &=& f,&\quad \text{in}\quad \Omega,\quad \\
u&=&0,&\quad \text{on} \quad \partial \Omega,\quad \\
\end{array}
\right.
\vspace{-0.25cm}
\end{equation} 
where $\Omega$ is the domain of our problem which is the unit square. The exact solution and the right-hand side are given by:
$$u(x,y)= \sin(2k\pi x)\sin(2k\pi y),$$
and
$$f(x, y) = 2 (2k\pi)^{2}\sin(2k\pi x)\sin(2k\pi y).$$ 
We applied the V(1,1)-cycle to the problem (\ref{jdd}) with $\omega=2/3$ on a grid with $N=64$ points in each direction, and we start with the zero vector as an initial guess. Then, we show in Figure \ref{figure7} and Figure \ref{figerrl2pois} the $L_2$ norms of the residual and the error as functions of the global iteration number of the V-cycle with and without acceleration techniques for spline degrees $p=4$ and $p=5$. Based on the observations that we have found in the $1D$ case, we notice that we use the polynomial extrapolation techniques with restart number $q=8$.

\begin{figure}[htbp]
\centering
    %\includegraphics[width=6.2cm,height=6.2cm]{extrapolation+V-cycle_residual.png}
    %\includegraphics[width=6.2cm,height=6.2cm]{extrapolation+V-cycle_error.png}
    %\captionsetup{justification=centering}
    \includegraphics[width=6.3cm,height=6.3cm]{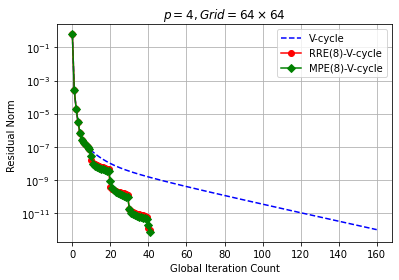}
   \includegraphics[width=6.3cm,height=6.3cm]{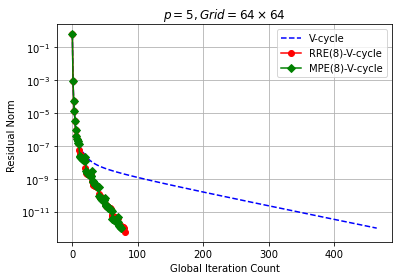}
   
    \caption{The norms of the residual of the $2D$ Poisson problem as a function of the iteration number}
    \label{figure7}
\end{figure}

\begin{figure}[htbp]
\centering
    %\includegraphics[width=6.2cm,height=6.2cm]{extrapolation+V-cycle_residual.png}
    %\includegraphics[width=6.2cm,height=6.2cm]{extrapolation+V-cycle_error.png}
    %\captionsetup{justification=centering}
     \includegraphics[width=6.3cm,height=6.3cm]{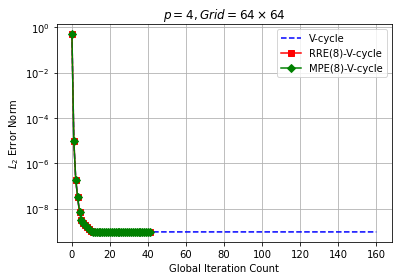}
     \includegraphics[width=6.3cm,height=6.3cm]{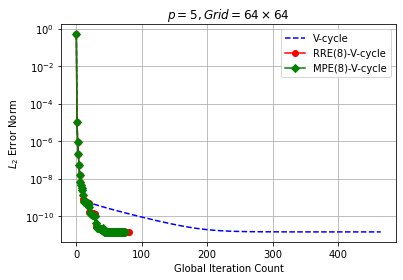}
    
    \caption{ $L_2$ error norms of the $2D$ Poisson problem as a function of the iteration number}
    \label{figerrl2pois}
\end{figure}

Similar to the $1D$ case, the findings show that the convergence rates of the RRE(8)-V-cycle and MPE(8)-V-cycle approaches are higher than those of the V-cycle scheme, and the convergence outcomes of the two approaches (RRE(8)-V-cycle and MPE(8)-V-cycle) are similar.

%We have discovered that the restarted MPE technique yields better convergence results than the restarted RRE approach based on the numerical results displayed in the $1D$ Poisson problem, so, we will concentrate on the following on the restarted MPE method.

In Table \ref{table11}, we intend to perform a comparative analysis involving a set of methods, including the V-cycle, the W-cycle, and the V-cycle enhanced by restarting the RRE and MPE process every $q$ steps. This time, our benchmark involves solving the $2D$ Poisson problem (\ref{jdd}), and we have kept the grid fixed with $N=64$ points in each direction for different values of $p$.
 
\begin{center}
\begin{table}[ht!]
\centering
  \caption{Performance of restarted extrapolation methods in comparison with classical multigrid in $2D$}
  \label{table11}
\begin{tabular}{ |c|c|c|c|c|c| } 
\hline
Spline degree & Method &  iter & $L_{2}\text{-res}$  & $L_{2}\text{-err}$ & CPU(s)\\
\hline
\multirow{3}{4em}{$p=1$} & V-cycle &10 & 3.42e-13& 4.01e-04 & 6.50  \\ 
& W-cycle & 4& 1.15e-13  & 4.01e-04 & 4.51 \\ 
&RRE(q=4)-V-cycle & 2 &1.74e-13& 4.01e-04 & 6.35 \\ 
&MPE(q=4)-V-cycle & 2 & 1.65e-13 & 4.01e-04& 6.06 \\ 
\hline
\multirow{3}{4em}{$p=2$} & V-cycle & 13 &  8.05e-13&   3.23e-06 & 12.47\\ 
& W-cycle &9 & 4.87e-13 & 3.23e-06 &  12.94\\ 
&RRE (q=4)-V-cycle &2 &  7.85e-13& 3.23e-06 & 11.07 \\ 
& MPE (q=4)-V-cycle & 2  & 7.86e-13 & 3.23e-06  & 10.89 \\ 
\hline
\multirow{3}{4em}{$p=3$} & V-cycle &49 &    9.63e-13& 5.85e-08 &94.27 \\ 
& W-cycle & 33&   8.62e-13 & 5.85e-08 & 71.84 \\ 
&RRE(q=8)-V-cycle & 3 & 7.57e-13& 5.85e-08 & 35.34 \\ 
& MPE(q=8)-V-cycle &  3 &  5.86e-13&   5.85e-08 & 36.97  \\ 
\hline
\multirow{3}{4em}{$p=4$} & V-cycle & 161 &  9.68e-13&  9.31e-10 & 506.31\\ 
& W-cycle &   108 &  9.99e-13&   9.31e-10 & 403.30 \\ 
&RRE(q=8)-V-cycle & 5 & 9.67e-13& 9.31e-10 & 120.06\\
&MPE(q=8)-V-cycle & 5 & 7.48e-13 &  9.31e-10 & 112.49 \\ 
\hline
\multirow{3}{4em}{$p=5$} & V-cycle & 466 &   9.84e-13&  1.46e-11 & 2472.61\\ 
& W-cycle & 352 & 9.84e-13  &1.46e-11  &  2110.09\\ 
& RRE(q=8)-V-cycle & 8 & 5.90e-13 & 1.46e-11 & 408.20 \\ 
&MPE(q=8)-V-cycle & 8 & 9.75e-13& 1.46e-11 & 380.39 \\

\hline
\end{tabular}
\end{table}
\end{center}

The convergence is significantly improved by combining standard multigrid approaches with polynomial extrapolation methods. Reductions in CPU time and iteration counts demonstrate this improvement in both $1D$ (Table \ref{table4}) and $2D$ (Table \ref{table11}) examples.

\subsection{The Full elliptic partial differential equation}

In order to demonstrate the efficiency of our hybrid solver, we conclude the linear case by considering the full elliptic partial differential equation (\ref{eq:fullelliptic}) with non-constant coefficients and homogeneous Dirichlet boundary conditions:
%Now, in order to demonstrate numerically the effectiveness of our extrapolation technique, we consider the full elliptic Partial
%Differential Equation (PDE) with non-constant coefficients and homogeneous Dirichlet boundary conditions:
\begin{equation}  
	\left\lbrace\begin{array}{lll}
		 -\nabla \mathbf{A}(x)\cdot \nabla\mathbf{u} + \mathbf{B}\cdot\nabla\mathbf{u} + \mathbf{c}\mathbf{u} &=~ f,&\text{ in } \Omega ,~~~~~~~~~~\\
	    \mathbf{u} &=~ 0, &\text{ on } \partial\Omega,~~~~~~~~\\
	\end{array}\right.
\label{eq:fullelliptic}
\end{equation}
where $\Omega$ is a bounded open domain in $\mathbb{R}^{d}$, $A(x): \Bar{\Omega}\longrightarrow \mathbb{R}^{d\times d}$ is a Symmetric Positive Definite (SPD) matrix of functions belonging to $\mathcal{C}^{1}(\Omega)\cap \mathcal{C}(\Bar{\Omega}),$ $B: \Bar{\Omega}\longrightarrow \mathbb{R}^{d}$ is a vector of functions in $\mathcal{C}(\Bar{\Omega})$, $c,f\in \mathcal{C}(\Bar{\Omega})$ and $c\geq 0.$

First, when $A$ is the identity matrix, $B$ is the zero vector, and $c=0$, we obtain the pure Laplacian problem. The numerical results of the pure Laplacian in $1D$ and $2D$ are already illustrated in Table \ref{table4} and Table \ref{table11}.

Next, we will demonstrate in the following that our hybrid approaches can be utilized as a preconditioner for solving linear systems resulting from the Galerkin discretization of elliptic problems, in addition to speeding up the convergence of classical multigrid methods. Therefore, we start by showing the performance of our RRE-V-cycle and MPE-V-cycle solvers for the pure Laplacian in $1D$ (\ref{chj}) and $2D$ (\ref{jdd}).
Hence, in Table \ref{tab:1dpoiss} and Table \ref{tab:2dpoiss} we vary for each grid size the spline degrees, and we illustrate the number of iterations required to reach convergence up to a precision of $10^{-12}$. We remark that the iteration numbers remain uniformly bounded as we increase the spline degree $p$. Then, we conclude that our techniques, RRE-V-cycle and MPE-V-cycle, are stable and robust with respect to the grid size and the spline degree.

\begin{table}[htbp]
\centering
\small % Utiliser \small ou \footnotesize pour les tableaux si demandé
\caption{Efficiency of extrapolation methods for the $1D$ Poisson problem with $q=8$}
\label{tab:1dpoiss}

\begin{tabular}{|r|c|c|c|c|c|c|c|}
\hline
\multicolumn{8}{|c|}{RRE(8)-V-cycle} \\ \hline
Grid & $p=2$ & $p=3$ & $p=4$ & $p=5$ & $p=6$ & $p=7$ & $p=8$ \\ \hline
32   & 1     & 1     & 1     & 2     & 3     & 3     & 6     \\ \hline
64   & 1     & 1     & 2     & 2     & 4     & 5     & 7     \\ \hline
128  & 1     & 1     & 2     & 3     & 4     & 5     & 7     \\ \hline
256  & 1     & 1     & 2     & 3     & 4     & 5     & 8     \\ \hline
\end{tabular}

\vspace{0.5cm} % Espacement entre les tableaux

\begin{tabular}{|r|c|c|c|c|c|c|c|}
\hline
\multicolumn{8}{|c|}{MPE(8)-V-cycle} \\ \hline
Grid & $p=2$ & $p=3$ & $p=4$ & $p=5$ & $p=6$ & $p=7$ & $p=8$ \\ \hline
32   & 1     & 1     & 1     & 2     & 2     & 3     & 5     \\ \hline
64   & 1     & 1     & 2     & 3     & 4     & 5     & 7     \\ \hline
128  & 1     & 1     & 2     & 3     & 4     & 5     & 7     \\ \hline
256  & 1     & 1     & 2     & 3     & 4     & 6     & 7     \\ \hline
\end{tabular}

\end{table}

\begin{table}[htbp]
\centering
\small % Vous pouvez ajuster la taille de police à footnotesize si nécessaire
\caption{Convergence results of extrapolation methods for the $2D$ Poisson problem with $q=8$}
\label{tab:2dpoiss}

\begin{tabular}{|r|c|c|c|c|c|}
\hline
\multicolumn{6}{|c|}{RRE(8)-V-cycle} \\ \hline
Grid & $p=1$ & $p=2$ & $p=3$ & $p=4$ & $p=5$ \\ \hline
$16 \times 16$   & 1 & 2 & 3 & 6 & 9 \\ \hline
$32 \times 32$   & 1 & 2 & 3 & 5 & 9 \\ \hline
$64 \times 64$   & 1 & 1 & 3 & 5 & 8 \\ \hline
$128 \times 128$ & 1 & 1 & 2 & 4 & 7 \\ \hline
\end{tabular}

\vspace{0.5cm} % Ajout d'un espace entre les deux tableaux pour une meilleure lisibilité

\begin{tabular}{|r|c|c|c|c|c|}
\hline
\multicolumn{6}{|c|}{MPE(8)-V-cycle} \\ \hline
Grid & $p=1$ & $p=2$ & $p=3$ & $p=4$ & $p=5$ \\ \hline
$16 \times 16$   & 1 & 2 & 3 & 6 & 8 \\ \hline
$32 \times 32$   & 1 & 2 & 3 & 6 & 10 \\ \hline
$64 \times 64$   & 1 & 1 & 3 & 5 & 8 \\ \hline
$128 \times 128$ & 1 & 1 & 2 & 4 & 7 \\ \hline
\end{tabular}

\end{table}

Considering this time the full elliptic PDE (\ref{eq:fullelliptic}) with non-constant and homogeneous Dirichlet boundary conditions coefficients on the unit square, we take
\begin{align*}
A(x,y) &=
\begin{bmatrix}
    (2 + \cos x)(1 + y)   & \cos(x + y) \sin(x + y) \\
   \cos(x + y) \sin(x + y) & (2 + \sin y)(1 + x)
\end{bmatrix} \\
B(x,y) &=
\begin{bmatrix}
    11+\sin(x) +y\sin(x) -2\cos(x+y) ^{2}\\
    -9-\cos(y)-x\cos(y)-2\cos(x+y)^{2}
\end{bmatrix} \\
c&=f(x,y)=1.
\end{align*}
We solve the resulting linear system of the described model problem (\ref{eq:fullelliptic}) using the RRE-V-cycle approach. We start with the zero vector as an initial guess and iterate until the $L_2$ norm of the residual is less than $10^{-12}$. The obtained results are collected in Table \ref{table13}.

 \begin{table}[ht]
        \centering
		
  \caption{Efficiency of RRE-V-cycle solver with $q=8$ for solving the full elliptic problem}
  \label{table13}
  
        \begin{tabular}{|l|l|l|l|l|l|}
\hline
 Grid  & $p=1$  & $p=2$   & $p=3$   & $p=4$ & $p=5$  \\
\hline
 $16\times 16$  & $2$  & $3$   & $4$   & $6$ & $9$    \\
 \hline
 $32\times 32$    & $3$  & $3$  &$4$    & $8$  & $9$   \\
 \hline
 $64 \times 64$  & $4$ & $4$   & $5$    & $9$   & $10$  \\
\hline
\end{tabular}
%\caption{Efficiency of MPE extrapolation method with $q=8$.}  
\end{table}

In addition, we show the convergence results, and we numerically demonstrate the efficiency of our technique on the following Advection-Diffusion problem (\ref{eq:paraboliceq}). The results are collected in the following Table \ref{table14}.

\begin{equation}
	\left\lbrace\begin{array}{lll}
		 -\mathbf{\epsilon} \Delta \mathbf{ u}+ \mathbf{B}\cdot\nabla\mathbf{u} &=~ f,&\text{ in } \Omega ,~~~~~~~~~~\\
	    \mathbf{u} &=~ 0, &\text{ on } \partial\Omega,~~~~~~~~\\
	\end{array}\right.
 \label{eq:paraboliceq}
\end{equation}
with $\epsilon= 10^{-1}$ and $B=(1,1).$

\begin{table}[h]
         \centering
      \caption{Convergence results of RRE-V-cycle solver with $q=8$}
      \label{table14}
        \begin{tabular}{|l|l|l|l|l|}
\hline
 Grid  & $p=1$   & $p=2$  & $p=3$   & $p=4$    \\
\hline
 $16 \times 16$       & $2$   & $2$   & $4$   & $6$     \\
 \hline
 $32 \times 32$       & $2$   & $2$  &$4$    & $5$     \\
 \hline
 $64 \times 64$       & $2$   & $2$   & $3$    & $6$     \\
% $128 \times 128$      & $2$   & $2$   & $3$    & $5$   \\
\hline
\end{tabular}
%\caption{Efficiency of MPE extrapolation method with $q=8$.}  
\end{table}

Now consider the full elliptic problem (\ref{eq:fullelliptic}) in $2D$ case, defined on a quarter of an annulus approximated by the B-spline approach (refer Figure \ref{figure6}).
With $\Omega=\{ (x,y)\in \mathbb{R}^{2}: r^{2} < x^{2}+y^{2} < R^{2}, x> 0, y> 0  \}, r=0.2, R=1.$ Additionally, we take

\begin{align*}
A(x,y) &=
\begin{bmatrix}
    (2 + \cos x)(1 + y)   & \cos(x + y) \sin(x + y) \\
   \cos(x + y) \sin(x + y) & (2 + \sin y)(1 + x)
\end{bmatrix} \\
B(x,y) &=
\begin{bmatrix}
-5y\\
   5x
\end{bmatrix} \\
c(x,y)&=xy,
\end{align*}
we test our solver using a manufactured solution:
%and $f(x, y)$ computed from the exact solution
$$u(x, y) = (x^{2}+y^{2}-0.2^ 2)(x^{2}+y^{2}-1) \sin x \sin y.$$

\begin{figure}[tbhp]
\centering
\includegraphics[width=6.5cm,height=6.5cm]{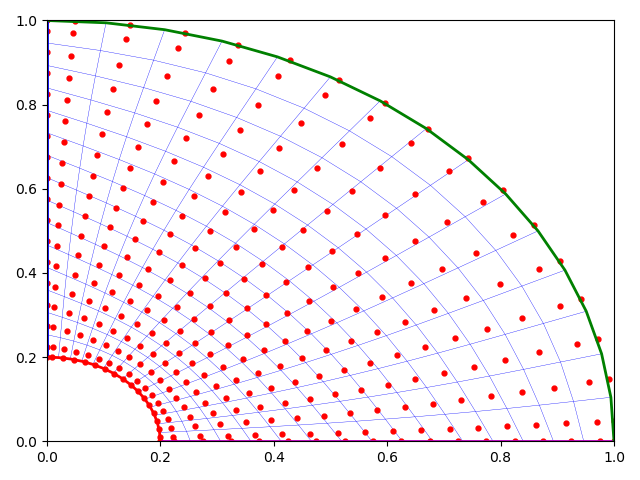}
\includegraphics[width=6.5cm,height=6.5cm]{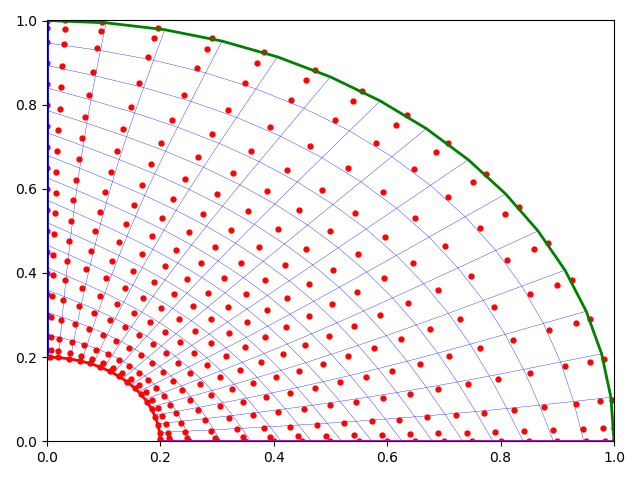}

\caption{Quarter annulus as (left) biquadratic and (right) bicubic B-spline surface}
\label{figure6}
\end{figure}

With our solver, which combines the extrapolation methods and multigrid approaches, we were able to solve the linear system that emerged from the IGA approximation of this problem. The findings are reported in Table \ref{table15}. We note that, regardless of spline degree $p$, the RRE-V-cycle exhibits a stable convergence rate.
    \begin{table}[ht]
        \centering
      \caption{Convergence results of the RRE-V-cycle solver with $q=8$, applied to the $2D$ full elliptic problem on a quarter of an annulus at different grid sizes and for different spline degrees $p$. Number of iterations to achieve the tolerance of $10^{-12}$}
      \label{table15}
        \begin{tabular}{|l|l|l|l|l|l|}
\hline
 Grid   & $p=1$   & $p=2$   & $p=3$   & $p=4$ & $p=5$  \\
\hline
 $16\times 16$       & $2$   & $2$   & $2$   & $4$  & $9$   \\
 \hline
 $32\times 32$       & $2$   & $2$   &$2$    & $4$   & $7$    \\
 \hline
 $64 \times 64$       & $2$   & $2$   & $3$    & $5$   & $6$    \\
 %$128 \times 128$      & $-$  & $$   & $$    & $$   & $$   \\
\hline
\end{tabular}
%\caption{Convergence results of the MPE extrapolation method with $q=8$, applied to the $2D$ full elliptic problem on a quarter of an annulus at different grid sizes and for different spline degrees $p$. Number of iterations to achieve the tolerance of $10^{-12}$.}  
\end{table}
\\
We sum up the numerical example by displaying the error $ \vert u(x,y)-u_{h} (x,y)\vert/ \Vert u \Vert_{\infty}$ for $N =32$ in each direction, and for spline degrees $p=2,3$ in Figure \ref{fig7}, where $u_h$ is the estimated solution. We additionally indicate in this figure the 2-norm of the relative error.

\begin{figure}[ht!]
    \centering
    
    \begin{subfigure}{0.4\textwidth}
    \includegraphics[width=6.5cm,height=6.5cm]{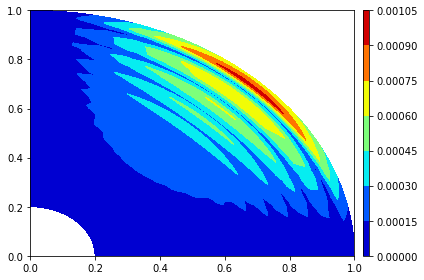}
        \caption{$N=32,p=2$, $err= 9.0 \times 10^{-4}$ }
        \label{fig:a}
    \end{subfigure}
    \hfill
    \begin{subfigure}{0.4\textwidth}
       \includegraphics[width=6.5cm,height=6.5cm]{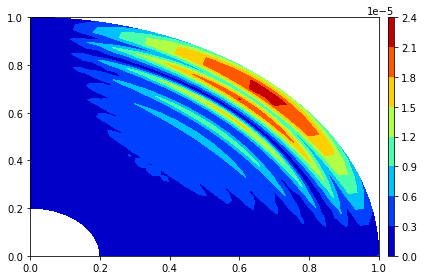}
        \caption{$N=32,p=3$, $err= 2.1 \times 10^{-5}$ }
        \label{fig:b}
    \end{subfigure}
    \caption{Error of the estimated solution for $N=32$ and for different values of spline degree $p,$ where err=$\Vert u-u_h \Vert/ \Vert u \Vert$}
   \label{fig7}
\end{figure}

%pour une certaine experirn
\newpage
\section{Conclusions}
\label{sec:conclusions}

In this paper, we have investigated the fundamental characteristics of multigrid methods in the context of isogeometric analysis. Our exploration has highlighted that these multigrid methods are optimal in the sense that their convergence rate is independent of the mesh size. However, this is not the case for the spline degree, as we have mentioned in the introduction and numerical results. Since multigrid methods can be expressed as fixed-point iterations, we have delved into the utilization of polynomial extrapolation techniques such as the reduced rank extrapolation (RRE) and the minimal polynomial extrapolation (MPE) methods. We have built a hybrid solver by combining classical geometric multigrid schemes with polynomial extrapolation methods (RRE-V cycle and MPE-V cycle), and we have found that it performs exceptionally well. The performance of our approach lies in accelerating the standard multigrid iterations. This can also be seen as a preconditioner, which provides stable and robust convergence with respect to the spline degrees. 

To illustrate the efficiency of this combined approach, we have conducted numerical experiments to solve the full elliptic problem. The consistent results demonstrate the efficiency of RRE-V-cycle and MPE-V-cycle solvers to enhance the convergence rates of multigrid iterations. As a result, alternative application of our approach based on polynomial extrapolation techniques like RRE and MPE can be extended to the multicorrector stage of the linearization for nonlinear problems.

%\bibliography{sn-bibliography}% common bib file

\begin{thebibliography}{1}
\bibitem{C1} 
Hackbusch, W.: Multi-grid methods and applications. Springer Series in Computational Mathematics, Heidelberg (2003)

\bibitem{C2}
McCormick, S.F.: Multigrid methods. SIAM (1987)
  
\bibitem{C3}
Briggs, W.L., Henson, V.E., McCormick, S.F.: A multigrid tutorial. SIAM (2000)

\bibitem{C4} 
Hughes, T.J.R., Cottrell, J.A., Bazilevs, Y.: Isogeometric analysis: CAD, finite elements, NURBS, exact geometry and mesh refinement. Comput. Methods Appl. Mech. Eng. 194(39-41), 4135--4195 (2005)

 \bibitem{C5} 
Bazilevs, Y., Beirao da Veiga, L., Cottrell, J., Hughes, T.J.R., Sangalli, G.: Isogeometric analysis: approximation, stability and error estimates for h-refined meshes. Math. Models Methods Appl. Sci. 16(07), 1031--1090 (2006)
\bibitem{C6}
Cottrell, J.A., Hughes, T.J.R., Bazilevs, Y.: Isogeometric analysis: toward integration of CAD and FEA. John Wiley \& Sons (2009)

\bibitem{C7}	
Saad, Y.: Krylov subspace methods for solving large unsymmetric linear systems. Math. Comp. 37(155), 105--126 (1981)


\bibitem{C8} %1
Donatelli, M., Garoni, C., Manni, C., Serra-Capizzano, S., Speleers, H.: Robust and optimal multi-iterative techniques for IgA Galerkin linear systems. Comput. Methods Appl. Mech. Eng. 284, 230--264 (2015)





\bibitem{C9} %1	
Gahalaut, K.P.S., Kraus, J.K., Tomar, S.K.: Multigrid methods for isogeometric discretization. Comput. Methods Appl. Mech. Eng. 253, 413--425 (2013)

\bibitem{C10} %1
Donatelli, M., Garoni, C., Manni, C., Serra-Capizzano, S., Speleers, H.: Robust and optimal multi-iterative techniques for IgA collocation linear systems. Comput. Methods Appl. Mech. Eng. 284, 1120--1146 (2015)

\bibitem{C11} %1
Hofreither, C., Zulehner, W.: Mass Smoothers in Geometric Multigrid for Isogeometric Analysis. In: Boissonnat, JD., et al. (eds.) Curves and Surfaces, vol. 9213, pp. 272--279. Springer, Cham (2015)

\bibitem{C12} %1
Gahalaut, K.P.S., Tomar, S.K., Kraus, J.K.: Algebraic multilevel preconditioning in isogeometric analysis: Construction and numerical studies. Comput. Methods Appl. Mech. Eng. 266, 40--56 (2013)

\bibitem{C13} %1
Donatelli, M., Garoni, C., Manni, C., Serra-Capizzano, S., Speleers, H.: Symbol-based multigrid methods for Galerkin B-spline isogeometric analysis. SIAM J. Numer. Anal. 55(1), 31--62 (2017)

 
\bibitem{C14}	
Smith, D.A., Ford, W.F., Sidi, A.: Extrapolation methods for vector sequences. SIAM Rev. 29(2), 199--233 (1987)

\bibitem{C15} %1
Brezinski, C., Zaglia, M.R.: Extrapolation Methods: Theory and Practice. Elsevier, North-Holland, Amsterdam (1991)

\bibitem{C16} %1
Brezinski, C.: Convergence acceleration during the 20th century. J. Comput. Appl. Math. 122(1-2), 1--21 (2000)


\bibitem{C17} %1
Jbilou, K., Sadok, H.: Vector extrapolation methods. Applications and numerical comparison. J. Comput.
Appl. Math. 122 (1-2), 149--165 (2000)


\bibitem{C18} %1
Jbilou, K., Sadok, H.: Analysis of some vector extrapolation methods for solving systems of linear equations. Numer. Math. 70(1), 73--89 (1995)


\bibitem{C19}	
Jbilou, K., Sadok, H.: Some results about vector extrapolation methods and related fixed-point iterations. J. Comput. Appl. Math. 36(3), 385--398 (1991)


\bibitem{C20} %1	
Eddy, R.P.: Extrapolating to the limit of a vector sequence. In: Information linkage between applied mathematics and industry, pp. 387–396. Elsevier, Academic Press, New York (1979)



\bibitem{C21} %1	
Me{\v{s}}ina, M.: Convergence acceleration for the iterative solution of the equations x= ax+ f. Comput.
Methods Appl. Mech. Eng. 10(2), 165--173 (1977)



\bibitem{C22}
Duminil, S., Sadok, H.: Reduced Rank Extrapolation applied to electronic structure computations. ETNA. 38, 347–362 (2011)

\bibitem{C23}
Duminil, S., Sadok, H., Szyld, D.B.: Nonlinear Schwarz iterations with reduced rank extrapolation. Appl. Numer. Math. 94, 209--221 (2015)

\bibitem{C24} %1
Cabay, S., Jackson, L.W.: A polynomial extrapolation method for finding limits and antilimits of vector sequences. SIAM J. Numer. Anal. 13(5), 734--752 (1976)



\bibitem{C25} %1
Meurant, G.: Computer solution of large linear systems. Elsevier, North-Holland (1999)


\bibitem{C26}
Mazza, M., Manni, C., Ratnani, A., Serra-Capizzano, S., Speleers, H.: Isogeometric analysis for 2D and 3D curl--div problems: Spectral symbols and fast iterative solvers. Comput.
Methods Appl. Mech. Eng. 344, 970--997 (2019)

\bibitem{C27}
El Akri, A., Jbilou, K., Ratnani, A.: Auxiliary splines space preconditioning for B-splines finite elements: The case of H (curl, $\Omega$) and H (div, $\Omega$) elliptic problems. Comput. Math. Appl. 159, 102--121 (2024)


\bibitem{C28}
Bahari, M., Habbal, A., Ratnani, A., Sonnendr{\"u}cker, E.: Adaptive Isogeometric Analysis using optimal transport and their fast solvers. Comput. Methods Appl. Mech. Eng. 418, 116570 (2024)


\bibitem{C29} %1	
Yang, X., Mittal, R.: Efficient relaxed-Jacobi smoothers for multigrid on parallel computers. J. Comput. Phys. 332, 135--142 (2017)


\bibitem{C30} %1
Tielen, R.P.W.M., M{\"o}ller, M., G{\"o}ddeke, D., Vuik, C.: p-multigrid methods and their comparison to h-multigrid methods within Isogeometric Analysis. Comput. Methods Appl. Mech. Eng. 372, 113347 (2020)


\bibitem{C31} %1
Sidi, A.: Efficient implementation of minimal polynomial and reduced rank extrapolation methods. J. Comput. Appl. Math. 36(3), 305--337 (1991)

\bibitem{C32} %1
Brezinski, C.: G{\'e}n{\'e}ralisations de la transformation de Shanks, de la table de Pad{\'e} et de l'$\varepsilon$-algorithme. Calcolo. 12, 317--360 (1975)


\bibitem{C33} %1
Ford, W.F., Sidi, A.: Recursive algorithms for vector extrapolation methods. Appl. Numer. Math. 4(6), 477--489 (1988)

\bibitem{C34} %1
Duminil, S., Sadok, H., Silvester, D.: Fast solvers for discretized Navier-Stokes problems using vector extrapolation. Numer. Algorithms. 66, 89--104 (2014)

\end{thebibliography}
%% if required, the content of .bbl file can be included here once bbl is generated

\end{document}